\theoremstyle{definition}
\newtheorem{nul}{}[section]
\newtheorem{dfn}[nul]{Definition}
\newtheorem{rmk}[nul]{Remark}
\newtheorem{cnstr}[nul]{Construction}
\newtheorem{ntn}[nul]{Notation}
\newtheorem{exm}[nul]{Example}
\newtheorem*{dfn*}{Definition}
\newtheorem*{axm*}{Axiom}
\newtheorem*{ntn*}{Notation}
\newtheorem*{exm*}{Example}
\newtheorem*{exr*}{Exercise}
\newtheorem*{int*}{Intuition}
\newtheorem*{qst*}{Question}
\newtheorem*{rmk*}{Remark}
\theoremstyle{plain}
\newtheorem{thm}[nul]{Theorem}
\newtheorem{prop}[nul]{Proposition}
\newtheorem{lem}[nul]{Lemma}
\newtheorem{var}[nul]{Variant}
\newtheorem{cor}{Corollary}[nul]
\newtheorem*{thm*}{Theorem}
\newtheorem*{prop*}{Proposition}
\newtheorem*{cor*}{Corollary}
\newtheorem*{lem*}{Lemma}
\newtheorem*{cnj*}{Conjecture}
\DeclareFontFamily{U}{rcjhbltx}{}
\DeclareFontShape{U}{rcjhbltx}{m}{n}{<->rcjhbltx}{}
\DeclareSymbolFont{hebrewletters}{U}{rcjhbltx}{m}{n}
\DeclareMathSymbol{\tsadi}{\mathord}{hebrewletters}{118}
\definecolor{DefColor}{rgb}{0.6,0.15,0.25}
\newcommand{\mdef}[1]{\textcolor{DefColor}{#1}}
\DeclareMathOperator{\Nm}{\mathrm{Nm}}
\DeclareMathOperator*{\colim}{\mathrm{colim}}
\DeclareMathOperator{\Map}{\mathrm{Map}}
\DeclareMathOperator{\fib}{\mathrm{fib}}
\DeclareMathOperator{\Hom}{\text{Hom}}
\newcommand{\one}{\mathds{1}}%
\def\bC{\mathbb{C}}
\def\Q{\mathbb{Q}}
\def\bS{\mathbb{S}}
\def\Z{\mathbb{Z}}
\def\C{\mathcal{C}}
\def\D{\mathcal{D}}
\DeclareMathOperator{\Fun}{\text{Fun}}
\DeclareMathOperator{\Mod}{\mathrm{Mod}}
\DeclareMathOperator{\Sp}{\mathrm{Sp}}
\newcommand{\Spaces}{\mathcal{S}}
\DeclareMathOperator{\Span}{\mathrm{Span}}
\DeclareMathOperator{\Fin}{\mathrm{Fin}}
\newcommand{\fin}{\mathrm{fin}}
\newcommand{\Vect}{\mathrm{Vect}}
\newcommand{\Cat}{\mathrm{Cat}}
\newcommand{\CAlg}{\mathrm{CAlg}}
\newcommand{\KU}{\mathrm{KU}}
\def\gl1{\mathrm{gl}_1}
\newcommand{\id}{\mathrm{id}}
\newcommand{\gp}{\mathrm{gp}}
\newcommand{\PrL}{\mathrm{Pr}^{\mathrm{L}}}
\newcommand{\Sfin}[2]{\Spaces_{#1}^{(#2)}}
\newcommand{\CMon}[2]{\mathrm{CMon}_{#1}^{(#2)}}
\newcommand{\PMon}[2]{\mathrm{PMon}_{#1}^{(#2)}}
\def\Flbar{\overline{\mathbb{F}}_{\ell}}
\def\Fl{\mathbb{F}_{\ell}}
\begin{document}

\begin{abstract}
We construct a lift of the $p$-complete sphere to the universal height $1$ higher semiadditive stable $\infty$-category $\tsadi_1$ of Carmeli--Schlank--Yanovski, providing a counterexample, at height $1$, to their conjecture that the natural functor $\tsadi_n \to \Sp_{T(n)}$ is an equivalence.  We then record some consequences of the construction, including an observation of T. Schlank that this gives a conceptual proof of a classical theorem of Lee on the stable cohomotopy of Eilenberg--MacLane spaces.  
\end{abstract}

\title{The sphere of semiadditive height 1}
\author{Allen Yuan}
\maketitle


\section{Introduction} 

Let $A$ be an abelian group with an action of a finite group $G$.  Then there is a canonical map
\[
\mathrm{Nm}_G: A_G \to A^G \]
given by the formula $[a] \mapsto \sum_{g\in G} ga$.  While  $\mathrm{Nm}_G$ is not an isomorphism in general, it is an isomorphism when $|G|$ acts invertibly on $A$, for instance when $A$ is a vector space over the rational numbers.  This phenomenon underlies Maschke's theorem on the semisimplicity of the representation theory of $G$ in characteristic zero.

The map $\mathrm{Nm}_G$ being an isomorphism for $\Q$-vector spaces is among the simplest examples of what has been dubbed \emph{ambidexterity}: certain colimits ($G$-orbits) and limits ($G$-fixed points) canonically agreeing.  In chromatic homotopy theory, one studies a whole family of generalizations of this example, known as the telescopic localizations\footnote{These are closely related to the localizations $\Sp_{K(n)}$ of spectra at the Morava $K$-theories.} $\Sp_{T(n)}$ of the $\infty$-category of spectra for each prime $p$ and integer $n\geq 0$.\footnote{The prime $p$ will be fixed and implicit throughout.}  
Building on work of  Hovey-Sadofsky \cite{HS96} and Greenlees-Sadofsky \cite{GS96}, Kuhn showed that:

\begin{thm}[Kuhn \cite{Kuhn}]\label{thm:kuhn}
Let $X$ be a $T(n)$-local spectrum with the action of a finite group $G$.  Then the canonical map $$\Nm_G: L_{T(n)}X_{hG} \to X^{hG}$$ is an equivalence.  
\end{thm}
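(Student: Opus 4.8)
The plan is to recast the statement as the vanishing of a Tate construction and then to reduce, by d\'evissage in the group, to an essentially chromatic input for $C_p$. Since $X$ is $T(n)$-local, so is the limit $X^{hG}$, and hence the map $\Nm_G$ is identified with the $T(n)$-localization of the norm map $X_{hG}\to X^{hG}$, whose cofiber is the Tate construction $X^{tG}$. Thus the theorem is equivalent to the assertion --- call it $\mathrm{TV}(G)$ --- that $L_{T(n)}\bigl(X^{tG}\bigr)\simeq 0$ for every $T(n)$-local spectrum $X$ equipped with a $G$-action, and I would prove $\mathrm{TV}(G)$ for all finite $G$ by induction on $|G|$.

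For the inductive step I would first note that if $P\le G$ is a $p$-Sylow subgroup then $[G:P]$ is prime to $p$, hence acts invertibly on every $T(n)$-local spectrum, so the usual transfer argument makes $X^{tG}$ a retract of $X^{tP}$ and reduces us to the case $G=P$ a nontrivial $p$-group. Choosing a central subgroup $Z\cong C_p$ of $P$, the identifications $X_{hP}\simeq(X_{hZ})_{h(P/Z)}$ and $X^{hP}\simeq(X^{hZ})^{h(P/Z)}$ together with the compatibility of norm maps under composition of subgroups factor $\Nm_P$ as
\[
(X_{hZ})_{h(P/Z)}\ \xrightarrow{\ (\Nm_Z)_{h(P/Z)}\ }\ (X^{hZ})_{h(P/Z)}\ \xrightarrow{\ \Nm_{P/Z}\ }\ (X^{hZ})^{h(P/Z)},
\]
and the octahedral axiom then gives a cofiber sequence $(X^{tZ})_{h(P/Z)}\to X^{tP}\to(X^{hZ})^{t(P/Z)}$. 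As $T(n)$-acyclic spectra are closed under colimits and extensions, the left term --- a colimit over $B(P/Z)$ of copies of $X^{tZ}$ --- is $T(n)$-acyclic by $\mathrm{TV}(Z)$, and the right term is $T(n)$-acyclic by the inductive hypothesis $\mathrm{TV}(P/Z)$ applied to the $T(n)$-local spectrum $X^{hZ}$; hence $X^{tP}$ is $T(n)$-acyclic. This reduces everything to the base case $\mathrm{TV}(C_p)$.

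The base case carries the chromatic content, and this is where I expect the real difficulty. Since $(-)^{tC_p}\colon\Sp^{BC_p}\to\Sp$ is lax symmetric monoidal, $X^{tC_p}$ is a module over the ring $(L_{T(n)}\bS)^{tC_p}$ --- where $L_{T(n)}\bS$ carries the trivial action --- and any module over a $T(n)$-acyclic ring is $T(n)$-acyclic; so it suffices to show $(L_{T(n)}\bS)^{tC_p}$ is $T(n)$-acyclic. Fix a finite complex $V$ of type $n$. Smashing with $V$ commutes with $L_{T(n)}$ and, since $V$ is dualizable and carries the trivial action, with $(-)^{tC_p}$; therefore $V\otimes(L_{T(n)}\bS)^{tC_p}\simeq(L_{T(n)}V)^{tC_p}$, and it would be enough to prove this spectrum is zero. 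Now $L_{T(n)}V$ is $L^f_n$-local, so by the theorem of Hovey--Sadofsky \cite{HS96} (building on Greenlees--Sadofsky \cite{GS96}) that the Tate construction lowers chromatic height, $(L_{T(n)}V)^{tC_p}$ is $L^f_{n-1}$-local; on the other hand it is also $L^f_{n-1}$-acyclic, being the smash product of the type-$n$ complex $V$ with another spectrum while $L^f_{n-1}$ is smashing with $L^f_{n-1}V\simeq 0$. A spectrum which is both $L^f_{n-1}$-local and $L^f_{n-1}$-acyclic is zero, so $(L_{T(n)}V)^{tC_p}\simeq 0$, and since $V$ is a finite type-$n$ complex this forces $(L_{T(n)}\bS)^{tC_p}$ to be $T(n)$-acyclic, completing the induction.

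The hard part is thus the Hovey--Sadofsky input, whose heart is the explicit chromatic computation of Greenlees--Sadofsky: the height-$n$ formal group has $p$-series with leading term a unit times $x^{p^n}$, which makes $K(n)^{*}(BC_p)$ a finitely generated free $K(n)^{*}$-module and the Euler class of the canonical line bundle nilpotent, forcing the tower of Thom spectra of negative multiples of that line bundle --- whose inverse limit computes the Tate spectrum --- to be pro-zero. The delicate point worth flagging is that $(-)^{tC_p}$ does \emph{not} commute with the filtered colimits defining $L_{T(n)}V$: commuting it past them naively would give $(L_{T(n)}V)^{tC_p}\simeq L_{T(n)}(V^{tC_p})\simeq L_{T(n)}V\ne 0$, using the Segal conjecture identification $V^{tC_p}\simeq V$, so the vanishing genuinely relies on the $v_n$-periodicity (equivalently on the formal-group input above) and on the fact --- which is the theme of the present paper --- that one really is working $T(n)$-locally rather than only $K(n)$-locally.
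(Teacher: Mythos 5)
Your formal skeleton is the standard one and is essentially correct: identifying the failure of $\Nm_G$ with the non-vanishing of $L_{T(n)}(X^{tG})$, the Sylow transfer reduction to $p$-groups, the induction on $|P|$ via a central $C_p$ using the factorization of norms and the octahedral cofiber sequence $(X^{tZ})_{h(P/Z)}\to X^{tP}\to (X^{hZ})^{t(P/Z)}$, and the reduction of the base case to showing $(L_{T(n)}\bS)^{tC_p}$ is $T(n)$-acyclic, hence to the vanishing of $(L_{T(n)}V)^{tC_p}$ for a finite type~$n$ complex $V$. (Note the paper itself offers no proof of this statement; it is quoted from Kuhn \cite{Kuhn}, so the comparison here is with the literature.)

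The genuine gap is at the chromatic heart, step (c) of your base case: the assertion that the $C_p$-Tate construction of an $L^f_n$-local (equivalently here, $T(n)$-local) spectrum is $L^f_{n-1}$-local is \emph{not} the theorem of Hovey--Sadofsky. Their result \cite{HS96} is that the Tate construction of an $E(n)$-local spectrum is $E(n-1)$-local, and its proof runs exactly through the Greenlees--Sadofsky \cite{GS96} computation you describe as ``the heart'' --- the $p$-series/Euler-class argument in $K(n)^*(BC_p)$. That input only controls Morava $K$-theory, hence only yields the $K(n)$-local vanishing; a $T(n)$-local spectrum such as $L_{T(n)}V\simeq v_n^{-1}V$ need not be $E(n)$-local, and there is no formal-group computation of $T(n)_*(BC_p)$ to run the same argument telescopically. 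The telescopic ``blueshift'' statement you invoke is essentially equivalent to the theorem you are proving, and it is precisely the new content of Kuhn's paper, established by entirely different means (the Bousfield--Kuhn functor and his results on Tate cohomology and periodic localizations of polynomial functors). So as written the proposal is circular at its only non-formal step; your own closing remark, that the vanishing ``really relies on working $T(n)$-locally rather than $K(n)$-locally,'' is exactly the point at which the cited input fails to apply.
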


Hopkins and Lurie interpreted this result as asserting that the $\infty$-category $\Sp_{T(n)}$ satisfies a higher categorical analogue of semiadditivity \cite{HL}.  To illustrate this, note that in a semiadditive category $\C$, given a finite set $T$ and a functor $F: T \to \C$, the colimit of $F$ (coproduct) and the limit of $F$ (product) are canonically equivalent.  Analogously, \Cref{thm:kuhn} asserts that for any map $X: BG \to \Sp_{T(n)}$ from a finite groupoid  $BG$ (i.e. an object with $G$-action), its colimit (homotopy orbits) and limit (homotopy fixed points) are canonically equivalent; in the language of \cite{HL}, $\Sp_{T(n)}$ is \emph{1-semiadditive}.  More generally, for $m\geq 0$, one can define a notion of an $m$-semiadditive $\infty$-category: roughly, colimits and limits over any finite $m$-type are required to agree.  


\begin{thm}[Carmeli--Schlank--Yanovski \cite{CSYTeleAmbi}]
The $\infty$-category $\Sp_{T(n)}$  is $\infty$-semiadditive; i.e., it is $m$-semiadditive for every $m\geq 0$.  
\end{thm}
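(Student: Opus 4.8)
The plan is to induct on $m$, the degree of semiadditivity. The base case $m = -2$ (contractibility conditions) or $m = -1$ (just additivity) is essentially automatic; the case $m = 0$ is ordinary semiadditivity of the stable $\infty$-category $\Sp_{T(n)}$, which holds since it is a stable, and in fact presentable, $\infty$-category. The case $m = 1$ is precisely Kuhn's theorem (Theorem~\ref{thm:kuhn}) as reinterpreted by Hopkins--Lurie: the norm map $\Nm_{BG}\colon L_{T(n)} X_{hG} \to X^{hG}$ is an equivalence for every finite group $G$ acting on a $T(n)$-local spectrum $X$. So the content is the inductive step: assuming $\Sp_{T(n)}$ is $m$-semiadditive, deduce that it is $(m+1)$-semiadditive.

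\medskip

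\textbf{Reduction to connected, truncated spaces.} First I would recall the formal structure of the theory of higher semiadditivity from \cite{HL}: being $(m+1)$-semiadditive means that for every $(m+1)$-finite space (a space with finitely many components, each of which has finite homotopy groups vanishing above degree $m+1$), the colimit and limit functors agree via a canonically defined norm map which is built inductively out of the lower norm maps. Using the decomposition of an $(m+1)$-finite space into its components and the Postnikov/fibration-sequence formalism, one reduces the problem to checking ambidexterity for spaces of the form $K(\pi, m+1)$ with $\pi$ a finite abelian group --- equivalently, using that any such space is, up to finite products, $B^{m+1}\Z/p^k$ for various $k$ (the prime-to-$p$ part is invertible and handled by the $0$-semiadditive structure since those spaces are already ambidextrous after inverting the relevant integers). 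The key reduction here, which I expect the authors carry out carefully, is that ambidexterity for a fibration follows from ambidexterity of the base and all fibers together with $m$-semiadditivity controlling the ``integration over the fibers'' --- this is the ``ambidexterity is local'' or ``decomposition'' principle.

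\medskip

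\textbf{The main obstacle: the $p$-local Eilenberg--MacLane spaces.} The heart of the matter is then to show that $B^{m+1}\Z/p$ (say) is $\Sp_{T(n)}$-ambidextrous. The strategy I would use is the one from \cite{CSYTeleAmbi}: exhibit the norm map for $B^{m+1}\Z/p$ as obtained by ``looping'' or ``delooping'' the norm map for $B^m \Z/p$, which is ambidextrous by the inductive hypothesis, and then reduce the claim to a finiteness statement --- namely that a certain self-map of the unit, or a certain ``dimension''/Euler characteristic invariant associated to $B^m \Z/p$, is invertible in $\pi_0 L_{T(n)} \bS$. Concretely, in the $\infty$-semiadditive setting the obstruction to bootstrapping ambidexterity up one level is the invertibility of the composite $\one \to (B^m\Z/p)_{hG} \simeq (B^m\Z/p)^{hG} \to \one$ (the ``cardinality'' of $B^m\Z/p$); one shows this using the structure of $T(n)$-homology of Eilenberg--MacLane spaces --- in particular that $T(n)_* B^j\Z/p$ is a finitely generated free module and the relevant cardinality computes to a unit. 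This last input is where genuine chromatic content enters, via the Ravenel--Wilson computation of Morava $K$-theory of Eilenberg--MacLane spaces (and its telescopic refinement), and I expect the careful verification that the cardinality is a unit --- rather than merely nonzero --- to be the main technical obstacle, handled by an induction intertwined with the induction on $m$.

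\medskip

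An alternative, more formal route --- and the one I would actually try first --- is to avoid the explicit Ravenel--Wilson input by using the \emph{bootstrap} or \emph{amenability} machinery: show that $\Sp_{T(n)}$ being $m$-semiadditive and \emph{$p$-typically} well-behaved (e.g. the unit being ``$\pi$-finite complete'' or the category being generated under colimits by dualizable objects with the right finiteness) forces $(m+1)$-semiadditivity automatically, by a purely categorical argument that the ``$p$-adic cardinality'' of any $(m+1)$-finite space is invertible once all lower cardinalities are. In either approach, the skeleton is: (i) formal reduction to $B^{m+1}\Z/p$; (ii) express its norm via the $m$-semiadditive structure; (iii) prove invertibility of the associated cardinality in $\pi_0 L_{T(n)}\bS$. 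Step (iii) is the crux.
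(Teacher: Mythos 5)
The paper does not prove this statement; it is imported as Theorem A of \cite{CSYTeleAmbi}, so there is no internal proof to compare against, and your proposal has to be measured against the argument of that reference. Your skeleton is the right one at the formal level: induction on $m$, with Kuhn's theorem (\Cref{thm:kuhn}), in the Hopkins--Lurie reformulation, as the base case, and a reduction of the inductive step (via decomposition into components and fibration/Postnikov arguments) to ambidexterity of the $p$-primary Eilenberg--MacLane spaces $B^{m+1}C_p$. That much agrees with \cite{HL} and \cite{CSYTeleAmbi}.

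The gap is in the crux, and it affects both of your proposed routes. Route (a) invokes the Ravenel--Wilson computation ``and its telescopic refinement'': no such telescopic refinement exists. $T(n)_*$ of Eilenberg--MacLane spaces is not known to be finite or free, and this is precisely what separates the Hopkins--Lurie theorem for $\Sp_{K(n)}$ from the Carmeli--Schlank--Yanovski theorem for $\Sp_{T(n)}$; assuming it begs the question. Route (b) makes the invertibility of the cardinality $|B^m C_p|$ in $\pi_0 \bS_{T(n)}$ the obstruction to passing from $m$- to $(m+1)$-semiadditivity, but this is false in exactly the range where the induction has content: by the notion of semiadditive height recalled in this paper, $\Sp_{T(n)}$ has height $n$, so every object is $|B^m C_p|$-complete for $m \leq n-1$; in particular $|B^m C_p|$ is \emph{not} a unit in $\pi_0 \bS_{T(n)}$ for those $m$ (already $|C_p|=p$ is not invertible), and yet $\Sp_{T(n)}$ is $(m+1)$-semiadditive there. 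The true statement in this vicinity --- amenability of a connected ambidextrous space implies ambidexterity of its delooping --- is only a sufficient criterion and cannot drive the induction below the height; if your step (iii) were really needed, the theorem would be false for $n\geq 2$. What is missing is the genuinely new ingredient of \cite{CSYTeleAmbi}: the Bousfield--Kuhn functor $\Phi_n$, with $\Phi_n\Omega^\infty \simeq \id$ on $\Sp_{T(n)}$, is used (as a suitably semiadditive functor) to establish the inductive step without any computation of telescopic homology of Eilenberg--MacLane spaces --- the same structural feature of $\Sp_{T(n)}$ that the present paper points to when explaining why its property (2) holds for $\Sp_{T(n)}$.
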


This result implies the analogous result for $\Sp_{K(n)}$, which was first proved by Hopkins--Lurie \cite{HL}.   In fact, Carmeli--Schlank--Yanovski show that if $R$ is a nonzero $p$-local homotopy ring such that the corresponding localization $\Sp_R$ of spectra is $1$-semiadditive, then $\Sp_{K(n)} \subset \Sp_R \subset \Sp_{T(n)}$ for some $n$ \cite[Theorem B]{CSYTeleAmbi}.

Given these results, one can wonder whether these are essentially all examples of $p$-local higher semiadditive stable $\infty$-categories.  This question was studied in detail in \cite{CSYHeight}, where Carmeli--Schlank--Yanovski formulate a (purely categorical) notion of an $\infty$-semiadditive $\infty$-category being ``of semiadditive height $n$.''  One of the defining features of such an $\infty$-category is that the cohomology of any $n$-connected $\pi$-finite space\footnote{A space $X$ is $\pi$-finite if it has finitely many components and on each of them, $\pi_*(X)$ is a finite group.} vanishes (cf. \Cref{rmk:trunc}).  They show that $\Sp_{T(n)}$ is of semiadditive height $n$ and moreover that there is a ``universal'' such $\infty$-category:

\begin{thm}[{\cite[Theorem F]{CSYHeight}}]
For $n\geq 0$, there exists an idempotent commutative algebra $\tsadi_n$\footnote{The letter $\tsadi$, pronounced ``tsadi,'' is the first letter in the Hebrew word for ``color.''} in the $\infty$-category of presentable $\infty$-categories such that a presentable $\infty$-category $\C$ is stable, $p$-local, and $\infty$-semiadditive of height $n$ if and only if $\C$ admits a (necessarily unique) structure of a module over $\tsadi_n$.  
\end{thm}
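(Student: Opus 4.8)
The plan is to present the class of presentable $\infty$-categories satisfying each of the four conditions in the statement --- stable, $p$-local, $\infty$-semiadditive, and of semiadditive height $n$ --- as the modules over a suitable idempotent algebra in the presentably symmetric monoidal $\infty$-category $\PrL$ of presentable $\infty$-categories and left adjoint functors, i.e.\ as the local objects of a smashing Bousfield localization, and then to amalgamate the four idempotent algebras by tensoring them. Recall that $\PrL$ is symmetric monoidal under the Lurie tensor product with unit $\Spaces$, and that an object $E \in \CAlg(\PrL)$ is \emph{idempotent} --- the multiplication $E \otimes E \to E$ being an equivalence --- exactly when the forgetful functor $\Mod_E(\PrL) \to \PrL$ is fully faithful, in which case it identifies $\Mod_E(\PrL)$ with the reflective subcategory of $E$-local presentable $\infty$-categories, with localization $\C \mapsto E \otimes \C$. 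Two standard consequences of the theory of idempotent algebras do most of the bookkeeping: being an $E$-module is then a \emph{property} of $\C$, not extra structure, which yields the uniqueness clause for free; and if $E_1, E_2$ are idempotent then so is $E_1 \otimes E_2$, with $\Mod_{E_1 \otimes E_2}(\PrL) = \Mod_{E_1}(\PrL) \cap \Mod_{E_2}(\PrL)$. It therefore suffices to produce idempotent algebras isolating the four conditions separately and to define $\tsadi_n$ as their tensor product.

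The first two conditions are classical: $\Sp$ is idempotent in $\PrL$ with $\Mod_{\Sp}(\PrL)$ the stable presentable $\infty$-categories, and, $\bS_{(p)}$ being a smashing localization of the sphere, $\Sp_{(p)}$ is idempotent with modules the $p$-local stable presentable $\infty$-categories. The substance is $\infty$-semiadditivity. Following \cite{CSYTeleAmbi}, for each finite $m \geq 0$ the $m$-semiadditive presentable $\infty$-categories form a reflective localization of $\PrL$, cut out by requiring the ambidexterity norms $\Nm_A$ to be invertible for every $\pi$-finite $m$-type $A$. One then shows that this localization is \emph{smashing} --- the crux of the whole argument --- so that it is given by an idempotent algebra $\Spaces^{(m)} \in \CAlg(\PrL)$. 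Since $\Spaces^{(m')}$ is $\Spaces^{(m)}$-local for $m' \geq m$ and the tensor product on $\PrL$ commutes with colimits, the sequential colimit $\Spaces^{\mathrm{sa}} := \colim_m \Spaces^{(m)}$ is again idempotent, and its modules are precisely the $\infty$-semiadditive presentable $\infty$-categories. The smashing claim should be deduced from the known fact that a presentable $\infty$-category linear over an $m$-semiadditive presentably symmetric monoidal $\infty$-category is itself $m$-semiadditive.

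Finally, set $\mathcal{A} = \Sp_{(p)} \otimes \Spaces^{\mathrm{sa}}$, an idempotent algebra whose modules are the stable $p$-local $\infty$-semiadditive presentable $\infty$-categories, and work inside $\Mod_{\mathcal A}(\PrL)$ --- symmetric monoidal with unit $\mathcal A$ and unit object the ``semiadditive sphere'' $\one_{\mathcal A}$. As in \cite{CSYHeight}, semiadditive height is governed by canonical classes in the homotopy of $\one_{\mathcal A}$ coming from the semiadditive dimensions of the iterated deloopings $B^k C_p$, and ``height exactly $n$'' is the conjunction of the vanishing of the cohomology of $n$-connected $\pi$-finite spaces (the ``height $\leq n$'' half, discussed above) with a complementary nondegeneracy at level $n$ (``height $\geq n$''). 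Each half is a further reflective localization of $\Mod_{\mathcal A}(\PrL)$ --- concretely, imposing that the relevant canonical classes of $\one_{\mathcal A}$ vanish, respectively become invertible --- which one again checks to be smashing, hence given by an idempotent $\mathcal A$-algebra; the tensor product over $\mathcal A$ of these two, regarded as an object of $\CAlg(\PrL)$, is the desired $\tsadi_n$. I expect the main obstacle to be exactly these smashing verifications --- that the $\infty$-semiadditive and the two height classes of presentable $\infty$-categories are closed under tensoring with an arbitrary presentable $\infty$-category --- with everything else being formal manipulation of idempotent algebras or the assembly of established facts about higher ambidexterity from \cite{HL, CSYTeleAmbi}.
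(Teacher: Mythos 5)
First, note that the paper does not actually prove this statement: it is imported wholesale from \cite[Theorem F]{CSYHeight}, and the only in-paper trace of the construction is \Cref{thm:tsadi_desc}, which identifies $\tsadi_n$ with the full subcategory of $|B^{n-1}C_p|$-complete, $|B^nC_p|$-divisible objects of $\CMon{n}{p}(\Sp_{(p)})$. Your architecture---realize each of stability, $p$-locality, higher semiadditivity, and the height conditions as a mode (idempotent algebra in $\PrL$) and combine them by tensoring---is essentially the architecture of Carmeli--Schlank--Yanovski's proof. One genuine difference of route: you obtain $\infty$-semiadditivity by taking $\colim_m \Spaces^{(m)}$ over all $m$, whereas \cite{CSYHeight} impose only ($p$-typical) $n$-semiadditivity and then invoke their bootstrap theorem that a stable $p$-local $n$-semiadditive category of height $n$ is automatically $\infty$-semiadditive. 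Your shortcut is legitimate for the bare existence statement (modes are closed under such colimits, classifying the conjunction of the properties), but it forfeits the small model of \Cref{thm:tsadi_desc} unless the bootstrap is added back in.

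The genuine gap is in how you impose ``height $\geq n$.'' That condition is \emph{not} that a canonical class of $\one_{\mathcal A}$ vanishes; it is that every object be $|B^{n-1}C_p|$-complete in the sense recalled in the paper ($\Map(Z,X)=0$ for all $|B^{n-1}C_p|$-divisible $Z$). These are very different: for $n=1$ the relevant class is $|B^{0}C_p|=p$, which is nonzero in $\pi_0$ of the unit of any reasonable height-$1$ example (e.g.\ $p\neq 0$ in $\pi_0\bS_{T(1)}$, and $\Sp_{T(1)}$ must be a $\tsadi_1$-module); killing the class would classify a much stronger property and the ``only if'' direction of the theorem would fail. Correspondingly, the passage from $\mathcal A$ to its complete objects is a completion, not a localization inverting (or annihilating) an element of the unit, and it is not smashing as a localization of $\mathcal A$ over itself; the statement you actually need---that for a mode $\mathcal M$ and a cardinality class $\alpha$ the full subcategories of $\alpha$-divisible and of $\alpha$-complete objects are again modes, classifying ``every object is $\alpha$-divisible (resp.\ $\alpha$-complete)''---is a theorem of \cite{CSYHeight} with real content, which your sketch neither states in this form nor proves. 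Similarly, the idempotency of $\Spaces^{(m)}=\CMon{m}{}(\Spaces)$ is exactly Harpaz's theorem that $\CMon{m}{}(\Spaces)$ is itself $m$-semiadditive together with $\CMon{m}{}(\C)\simeq\CMon{m}{}(\Spaces)\otimes\C$ and the equivalence $\CMon{m}{}(\C)\simeq\C$ for $m$-semiadditive $\C$; your one-sentence reduction presupposes rather than supplies these inputs. Since you yourself flag ``the smashing verifications'' as the main obstacle and give no argument for them, the crux of the theorem is missing, and in the completeness half the mechanism you propose is the wrong one.
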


We will give a more contextualized introduction to this theorem in \Cref{sec:prelim}. 
For now, we note that $\tsadi_n$ is in particular initial among presentable stable $p$-local symmetric monoidal $\infty$-categories which are $\infty$-semiadditive of height $n$, and so it fits into a diagram of adjoint pairs
\[
\begin{tikzcd}
 & & \tsadi_n\arrow[rrd, bend left=25, "L_{T(n)}^{\tsadi}"] \arrow[lld, "U_{\tsadi_n}"] & & \\
 \Sp \arrow[rrrr,"L_{T(n)}"'] \arrow[rru, bend left=25,"L_{\tsadi_n}"] & & & &\Sp_{T(n)}, \arrow[llu,"U_{T(n)}^{\tsadi}"]\arrow[llll, bend left=20,"U_{T(n)}"]
\end{tikzcd}
\]
with colimit preserving symmetric monoidal left adjoints displayed on top.  With reference to this diagram, \cite{CSYHeight} showed that $\tsadi_n$ shares some striking similarities with $\Sp_{T(n)}$:
\begin{enumerate}
\item For $n\geq 1$, the unique symmetric monoidal colimit preserving functor $L_{\tsadi_n}: \Sp \to \tsadi_n$ vanishes on bounded above spectra \cite[Proposition 5.3.9]{CSYHeight}.   
\item For $n\geq 1$, the right adjoint of the unique symmetric monoidal colimit preserving functor $\Spaces \to \tsadi_n$ from the $\infty$-category of spaces is conservative \cite[Corollary 5.3.10]{CSYHeight}.  
\end{enumerate}
For instance, (2) holds with $\tsadi_n$ replaced by $\Sp_{T(n)}$ because of the existence of the Bousfield--Kuhn functor $\Phi_n: \Spaces \to \Sp_{T(n)}$.  Moreover:
\begin{enumerate}
\item[(3)] $L_{T(0)}^\tsadi$ is an equivalence of categories \cite[Example 5.3.7]{CSYHeight}.
\item[(4)] $L_{T(n)}^\tsadi$ is a smashing localization for any $n\geq 0$ \cite[Corollary 5.5.14]{CSYHeight}.
\end{enumerate}
Given this evidence, Carmeli--Schlank--Yanovski conjectured that the functor $L_{T(n)}^{\tsadi}$ might be an equivalence for $n>0$ as well.  

The main result of this note is a counterexample to this conjecture in the case $n=1$.  Note that for $L_{T(n)}^{\tsadi}$ to be an equivalence, $U_{\tsadi_n}$ must be identified with $U_{T(n)}$, the inclusion of $T(n)$-local spectra.   Our main result is:

%


\begin{thm} \label{thm:main}
Let $\bS_p$ denote the $p$-completion of the sphere spectrum.  Then there exists a commutative algebra $\bS^{\tsadi_1}_p \in \CAlg(\tsadi_1)$ together with an equivalence $U_{\tsadi_1}(\bS^{\tsadi_1}_p) \simeq \bS_p$ of commutative algebras in spectra.  
\end{thm}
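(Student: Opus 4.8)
The plan is to produce $\bS^{\tsadi_1}_p$ not by directly constructing an object of $\CAlg(\tsadi_1)$ from scratch, but by exhibiting the $p$-complete sphere $\bS_p$ as a limit (or filtered colimit of limits) of objects that are visibly in the image of $U_{\tsadi_1}$, using the fact that $U_{\tsadi_1}$ is a right adjoint and hence preserves limits. The natural source of objects in $\tsadi_1$ is the unique symmetric monoidal colimit-preserving functor $\Spaces \to \tsadi_1$, whose right adjoint I will call $(-)^{\tsadi_1}$ (a ``$\tsadi_1$-valued cochains'' functor); its composite with $U_{\tsadi_1}$ is a lax symmetric monoidal functor $\Spaces \to \Sp$ landing in $\CAlg(\Sp)$, and the content of the theorem is that some inverse limit of such cochain algebras, assembled cleverly, recovers $\bS_p$ rather than something larger.

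First I would recall from \cite{CSYHeight} the computation of $U_{\tsadi_1}$ applied to the image of a $\pi$-finite space: because $\tsadi_1$ has semiadditive height $1$, the $\tsadi_1$-cochains on an $n$-connected $\pi$-finite space vanish for $n\ge 1$, while for low connectivity one gets controlled answers built from the $K(1)$-local / $p$-adic $K$-theoretic behavior. The key case is $B^2\!\Z/p^k$ or more precisely the relationship between $\tsadi_1$-cochains and $T(1)$-local cochains: one has a comparison map from $\tsadi_1$-cochains to $T(1)$-local cochains, and one wants to show the former ``sees more,'' namely that it can be arranged to see the integral (rather than $K(1)$-local) homotopy type of the sphere. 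Concretely, I expect to use a tower of $\pi$-finite spaces — something like the Postnikov/Moore-space approximations to $S^0_p$, i.e. the spaces $\Sigma^\infty$-adjoint to truncations built from $\Z/p^k$ and $B\Z/p^k$ — whose $\tsadi_1$-cochains stabilize to $\bS_p$. Then $\bS^{\tsadi_1}_p$ is defined as the corresponding limit in $\CAlg(\tsadi_1)$, and $U_{\tsadi_1}(\bS^{\tsadi_1}_p) \simeq \bS_p$ follows by commuting $U_{\tsadi_1}$ past the limit and invoking the cochain computation at each finite stage.

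The main obstacle, and the heart of the argument, will be the stabilization/convergence step: showing that the relevant limit of $\tsadi_1$-cochain algebras on $\pi$-finite spaces computes exactly $\bS_p$ on the nose, with its full homotopy type, rather than its $T(1)$-localization or some intermediate truncation. This requires a genuine input beyond formal adjoint nonsense — presumably a descent or Galois-theoretic identification of $\bS_p$ as a limit over a diagram of ($p$-complete, $K(1)$-locally visible) pieces, together with the fact that height-$1$ semiadditivity makes $\tsadi_1$-cochains on these pieces \emph{integral} rather than telescopic in the relevant degrees. I would also need to check that the construction is compatible with the commutative algebra structure, which should be automatic since $(-)^{\tsadi_1}$ is lax symmetric monoidal and $U_{\tsadi_1}$ is symmetric monoidal (being the right adjoint of a symmetric monoidal localization / module functor), so limits of commutative algebras are computed underlying-wise and the equivalence upgrades to one of $\E_\infty$-rings. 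Finally, once this is in place, the counterexample to the Carmeli--Schlank--Yanovski conjecture is immediate: if $L_{T(1)}^{\tsadi}$ were an equivalence then $U_{\tsadi_1}$ would be $U_{T(1)}$, whose image consists of $T(1)$-local spectra, but $\bS_p$ is not $T(1)$-local.
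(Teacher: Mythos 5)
There is a genuine gap: your strategy is circular at its core. You propose to exhibit $\bS_p$ as a limit of objects of the form $U_{\tsadi_1}(\one^A)$ for $\pi$-finite $A$, computed via ``controlled answers'' for $\tsadi_1$-cochains. But $U_{\tsadi_1}(\one^A)\simeq U_{\tsadi_1}(\one)^A$, and the underlying spectrum $U_{\tsadi_1}(\one)$ of the unit of $\tsadi_1$ is precisely the kind of thing that is unknown before a theorem of this type is proved; nothing in \cite{CSYHeight} computes it, and the height-$1$ property only tells you that cochains on \emph{simply connected} $\pi$-finite spaces are trivial ($X^A\simeq X$), so the only potentially informative cochain objects are those on $1$-truncated spaces $BG$, whose underlying spectra are again $U_{\tsadi_1}(\one)^{BG}$ with unknown $U_{\tsadi_1}(\one)$. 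Your fallback -- identifying $\bS_p$ by descent as a limit of $p$-complete, ``$K(1)$-locally visible'' pieces -- cannot work either: $\Sp_{T(1)}\subset\Sp$ is closed under limits, so no limit of $T(1)$-local spectra is $\bS_p$, and the claim that height-$1$ semiadditivity forces the relevant cochains to be ``integral rather than telescopic'' is exactly the content of the theorem, not something you can invoke to prove it. (A smaller inaccuracy: $U_{\tsadi_1}$ is only lax symmetric monoidal, not symmetric monoidal, though that suffices for the algebra statement.)

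The missing idea is to sidestep computing anything about the unit of $\tsadi_1$ by using the concrete model of $\tsadi_1$ as a full subcategory of functors $\Span(\Sfin{1}{p})\to\Sp_{(p)}$ satisfying the Segal condition, $p$-completeness, and $|BC_p|$-divisibility, and then to \emph{construct} such a functor by hand. The paper does this by taking the category $\Fin$ of finite sets, forming $A\mapsto ((\Fin^A)^{\simeq})^{\gp}$ with span functoriality $g_!f^*$, and $p$-completing: Barratt--Priddy--Quillen identifies the value at $*$ with $\bS_p$, the ``trivial action followed by quotient'' argument shows $|BC_p|$ acts by the identity, and the essential arithmetic input verifying the Segal condition is the Segal conjecture (Carlsson's theorem), which identifies $K(\Fin^{BG})^{\wedge}_p$ with $(\bS_p)^{BG}$ for finite $p$-groups $G$. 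Your proposal never isolates this input, and without it (or something equally strong) the convergence step you flag as ``the heart of the argument'' has no mechanism to produce the integral homotopy type of $\bS_p$.
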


Our proof of this theorem, which is an application of the Segal conjecture (now a theorem of Carlsson \cite{Carlsson}), is given at the beginning of \Cref{sec:cnstr}.
Since $\bS_p$ is not $T(1)$-local\footnote{By the truth of the telescope conjecture at height $1$, this is the same notion as $K(1)$-local.}, \Cref{thm:main} implies that $L_{T(1)}^{\tsadi}$ cannot be an equivalence.  Nevertheless, since $L_{T(1)}^\tsadi$ is a smashing localization, $\Sp_{T(1)}$ sits as a full subcategory of $\tsadi_1$; in fact, we show in \Cref{sub:T1spherealg} that away from the prime $2$, the $T(1)$-local sphere $\bS_{T(1)}$ is an algebra over $\bS^{\tsadi_1}_p$.

\subsection{Applications}\label{sub:app}

The fact that $\bS_p$ lifts to an $\infty$-semiadditive category of semiadditive height $1$ turns out to have a number of interesting consequences which the author learned from S. Carmeli, T. Schlank, and L. Yanovski and which he thanks them for encouraging him to share. For instance, in \Cref{sub:cycl}, we discuss a consequence of \Cref{thm:main} for the higher cyclotomic extensions of \cite{CSYCyc}.  For the purposes of this introduction, we highlight a surprisingly immediate corollary of \Cref{thm:main} which a priori has nothing to do with ambidexterity: T. Schlank has observed that it gives a conceptual proof of the following classical theorem of Lee about the stable cohomotopy of Eilenberg--MacLane spaces.


\begin{cor}[{Lee \cite{Lee}}]\label{thm:lee}
Let $A$ be a simply connected $\pi$-finite space.  Then the natural map $A \to *$ induces an equivalence $\bS \simeq \bS^A$
on Spanier-Whitehead duals.   
\end{cor}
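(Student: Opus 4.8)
The plan is to use \Cref{thm:main} to identify the $p$-complete stable cohomotopy of $A$ with a cotensor inside $\tsadi_1$, where semiadditive height $1$ makes it vanish, and then to assemble the integral statement by arithmetic fracture. First I would write $\bS^A = F(\Sigma^\infty_+ A, \bS)$ and observe that the basepoint $* \hookrightarrow A$ splits the unit map, identifying it with the summand inclusion $\bS \hookrightarrow \bS \oplus F(\Sigma^\infty A, \bS) \simeq \bS^A$, where $\Sigma^\infty A$ is the reduced suspension spectrum; so it suffices to show $F(\Sigma^\infty A, \bS) \simeq 0$. Applying the limit-preserving functor $F(\Sigma^\infty A, -)$ to the arithmetic fracture square $\bS \simeq \bS_\Q \times_{(\prod_p \bS_p)_\Q} \prod_p \bS_p$ reduces this to two claims: (i) $F(\Sigma^\infty A, M) \simeq 0$ for every rational spectrum $M$, which covers the corners $\bS_\Q \simeq H\Q$ and $(\prod_p \bS_p)_\Q$; and (ii) $F(\Sigma^\infty A, \bS_p) \simeq 0$ for every prime $p$, which (as $F(\Sigma^\infty A, -)$ preserves products) covers the corner $\prod_p \bS_p$.

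Claim (i) is elementary and has nothing to do with $\tsadi_1$: the space $A$ is rationally contractible, being simply connected with finite homotopy groups, so $\Sigma^\infty A$ is $H\Q$-acyclic; hence for any $H\Q$-module spectrum $M$ the change-of-rings adjunction gives $F(\Sigma^\infty A, M) \simeq F_{H\Q}(H\Q \otimes \Sigma^\infty A, M) \simeq F_{H\Q}(0, M) \simeq 0$, and every rational spectrum is an $H\Q$-module.

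Claim (ii) is the heart, and is where \Cref{thm:main} is used. Fix a prime $p$ and set $R := \bS^{\tsadi_1}_p \in \CAlg(\tsadi_1)$, so that $U_{\tsadi_1}(R) \simeq \bS_p$. Because $\tsadi_1$ is $\infty$-semiadditive of semiadditive height $1$ and $A$ is a $1$-connected $\pi$-finite space, the reduced $\tsadi_1$-cohomology of $A$ vanishes (cf.\ \Cref{rmk:trunc}); equivalently, the map $R \to R^A := \lim_A R$ in $\tsadi_1$ induced by $A \to *$ is an equivalence. (If this is on record only for the unit $\one_{\tsadi_1}$, I would bootstrap: $\infty$-semiadditivity makes $A$ $\tsadi_1$-ambidextrous, so $R^A \simeq R \otimes_{\one_{\tsadi_1}} \one_{\tsadi_1}^{\,A}$ compatibly with the constant-diagram maps, and one tensors the unit statement with $R$.) I would then apply the right adjoint $U_{\tsadi_1}$, which preserves the $A$-indexed limit: $U_{\tsadi_1}(R^A) \simeq (U_{\tsadi_1}R)^A \simeq \bS_p^A \simeq F(\Sigma^\infty_+ A, \bS_p)$, and $U_{\tsadi_1}$ carries $R \to R^A$ to the natural map $\bS_p \to F(\Sigma^\infty_+ A, \bS_p)$. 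Hence the latter is an equivalence, i.e.\ $F(\Sigma^\infty A, \bS_p) \simeq 0$, which gives (ii).

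Given \Cref{thm:main}, there is essentially no homotopy-theoretic difficulty remaining — in particular no direct appeal to the Segal conjecture or to Lee's original argument. The step I would be most careful about is purely bookkeeping: checking that the equivalence produced inside $\tsadi_1$ is carried by $U_{\tsadi_1}$ to exactly the classical unit map $\bS_p \to \bS_p^A$ (so that the object shown to vanish really is the cohomotopy one wants), and that the height $1$ vanishing is invoked with the coefficient object $R$ rather than merely the unit of $\tsadi_1$ — which is the purpose of the ambidexterity bootstrap above, or of citing the appropriate general form of the statement in \cite{CSYHeight}.
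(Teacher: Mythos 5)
Your argument is essentially the paper's: rational triviality of $A$ plus arithmetic fracture reduces everything to the $p$-complete statement, which is then deduced from \Cref{thm:main} by applying the limit-preserving (and here conservative-enough) right adjoint $U_{\tsadi_1}$ to the height-$1$ vanishing $(\bS^{\tsadi_1}_p)^A \simeq \bS^{\tsadi_1}_p$. Your extra bookkeeping --- splitting off the basepoint, the $H\Q$-module argument for the rational corners, and the ambidexterity ``bootstrap'' --- is fine but unnecessary: \Cref{rmk:trunc} is already stated for an arbitrary object $X$ of the height-$1$ category, so it applies directly to $\bS^{\tsadi_1}_p$, not just to the unit.

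The one genuine point to repair is the coefficient \emph{space}, not the coefficient object: \Cref{rmk:trunc} (reflecting the $p$-typical height-$1$ structure of $\tsadi_1$) gives $X \simeq X^A$ only for $n$-connected \emph{$p$-finite} spaces, i.e.\ those with $p$-power torsion homotopy groups, whereas in your claim (ii) the space $A$ is merely $\pi$-finite. This is exactly why the paper inserts the reduction ``we may assume without loss of generality that $A$ has $p$-power torsion homotopy groups'' before invoking \Cref{thm:main}. The fix is one line: a simply connected $\pi$-finite space splits as a product of its $q$-primary components $A_q$; for $q \neq p$ the reduced $\F_p$-homology of $A_q$ vanishes and $A_q$ is of finite type, so $\bS_p \simeq \bS_p^{A_q}$ for purely $p$-completeness reasons, and hence in claim (ii) one may replace $A$ by its $p$-primary part before citing \Cref{rmk:trunc}. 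With that sentence added, your proof coincides with the paper's.
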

\begin{proof}
The statement is true with $\bS$ replaced by any rational ring spectrum, because $A$ is finite type and rationally trivial.  Therefore, by Sullivan's arithmetic fracture square, it suffices to prove the statement with $\bS$ replaced by its $p$-completion, where we may assume without loss of generality that $A$ has $p$-power torsion homotopy groups.  Then, since $U_{\tsadi_1}$ preserves limits, \Cref{thm:main} gives an equivalence
\[
\bS_p^A \simeq U_{\tsadi_1}(\bS^{\tsadi_1}_p)^A \simeq U_{\tsadi_1}((\bS^{\tsadi_1}_p)^A).
\]
But the $\infty$-category $\tsadi_1$ is $p$-typically $1$-semiadditive \emph{of semiadditive height $1$}, so the map $A\to *$ induces an equivalence $(\bS^{\tsadi_1}_p)^A \simeq \bS^{\tsadi_1}_p$ in $\tsadi_1$ (cf. \Cref{rmk:trunc}).  We therefore conclude, as this implies
\[
U_{\tsadi_1}((\bS^{\tsadi_1}_p)^A) \simeq U_{\tsadi_1}(\bS^{\tsadi_1}_p) \simeq \bS_p.
\]
\end{proof}

\begin{rmk} 
As was pointed out to the author by S. Carmeli, a similar proof shows that the spectrum of ``strict $p^r$-th roots of unity'' of $\bS_p$
\[
\mu_{p^r}(\bS_p) := \Hom_{\Sp}(\Z/p^r  , \gl1 \bS_p) = \Hom_{\CAlg(\Sp)}(\bS[\Z/p^r], \bS_p)
\]
has vanishing $\pi_k$ for $k\geq 2$; namely, because $\bS_p \simeq U(\bS^{\tsadi_1}_p)$, we have
\begin{align*}
\pi_2\Hom_{\CAlg(\Sp)}(\bS[\Z/p^r], \bS_p) &\cong \pi_0 \Hom_{\CAlg(\Sp)}(\bS[B^2\Z/p^r], \bS_p) \\
&\cong \pi_0 \Hom_{\tsadi_1}(L_{\tsadi_1}\bS[B^2 \Z/p^r], \bS^{\tsadi_1}_p) \cong 0,
\end{align*}
where the last isomorphism uses that $B^2 \Z/p^r$ is simply connected and $\pi$-finite (cf. \Cref{rmk:trunc}).   In fact, the existence of $\bS_p^{\tsadi_1}$ has even stronger consequences for the study of the strict units of $\bS$ and related objects, which will appear in future joint work with S. Carmeli and T. Nikolaus.  
\end{rmk}

\subsection{Acknowledgments}
The author would like to thank Shachar Carmeli, Tomer Schlank, and Lior Yanovski for inspiring discussions related to this material, and especially for teaching the author about the applications of \Cref{thm:main}.  He would also like to thank Clark Barwick and Jacob Lurie for helpful conversations and Shachar Carmeli and Thomas Nikolaus for comments on a draft.  The author was supported in part by NSF grant DMS-2002029.

\section{Universal higher semiadditive categories}\label{sec:prelim}

For the reader's convenience, we start with a brief review of the relevant definitions (\S\ref{sub:prelim1}, \S\ref{sub:prelim2}), culminating in the construction of $\tsadi_n$, a certain universal stable semiadditive category of height $n$.  The reader is encouraged to consult \cite{Harpaz} and \cite{CSYHeight} for a more systematic treatment of these ideas. 

The motivation behind the main construction is given in \S\ref{sub:ht1}; this section is intended to be as concrete as possible, and the author encourages the reader to start there, referring to previous sections as needed.  The technical aspects of the main construction, including compatibility with multiplicative structure, are handled in \S\ref{sub:technical}.

\subsection{Higher commutative monoids}\label{sub:prelim1}
Recall that if $X$ is an object in a semiadditive category, then $X$ acquires the (unique) structure of a commutative monoid: for any finite set $T$, one has a canonical $T$-fold addition map
\[
\int_T: X^T = \prod_T X \cong \coprod_T X \to X
\]
where the middle isomorphism comes from the fact that products and coproducts agree in a semiadditive category.  One way to phrase the definition of a commutative monoid is:

\begin{dfn}
Let $\Span(\Fin)$ denote the $(2,1)$-category of finite sets and spans, and let $\C$ be an $\infty$-category with products.  Then a \mdef{commutative monoid} in $\C$ is a functor 
\[
M: \Span(\Fin) \to \C
\]
satisfying the \emph{Segal condition}: i.e., for any $T\in \Span(\Fin)$, the collection  $\{ \rho_t = (T \xleftarrow{t} * \rightarrow *)\}_{t\in T}$ of arrows induces an equivalence
\[
\rho : M(T) \simeq M^T.
\]
\end{dfn}

In more concrete terms, $M(*)$ is the underlying object in $\C$ of the commutative monoid, and the functoriality in spans encodes the commutative monoid structure.  This definition has the feature that it generalizes easily to capture the algebraic structure enjoyed by objects in \emph{higher} semiadditive $\infty$-categories.  Namely, if $X$ is now an object in an $m$-semiadditive $\infty$-category, then not only does $X$ admit the structure of a commutative monoid, but for any $m$-truncated\footnote{That is, $\pi_*=0$ for $*>m$.} $\pi$-finite space $A$, one has an ``$A$-fold'' addition map
\[
\int_A: X^A = \lim_A X \cong \colim_A X \to X
\]
where the middle equivalence comes from the fact that colimits and limits over $m$-truncated $\pi$-finite spaces agree by $m$-semiadditivity.  These ``higher'' addition maps can be organized into an enhancement of a commutative monoid structure known as an \emph{$m$-commutative monoid}.  For our purposes, it will be convenient to work with a $p$-typical version of the story, so we state the definitions in this context:

\begin{ntn}
For $m\geq 0$, let $\mdef{\Sfin{m}{p}}$ denote the $\infty$-category of $m$-truncated $p$-finite spaces; i.e., spaces $A$ such that $\pi_0(A)$ is finite and the higher homotopy groups are finite $p$-groups concentrated in degrees $[1,m]$.  Let $\mdef{\Span(\Sfin{m}{p})}$ denote the $\infty$-category of spans of such spaces (cf. \cite[\S 5]{Barwick}). 
\end{ntn}

\begin{dfn}\label{dfn:cmonm}
A \mdef{$p$-typical pre-$m$-commutative monoid} in a presentable $\infty$-category $\C$ is a functor
\[
M: \Span(\Sfin{m}{p}) \to \C.
\]
We say that $M$ is a \mdef{$p$-typical $m$-commutative monoid} if it additionally satisfies the \emph{Segal condition}, that for any $m$-truncated $p$-finite space $A$, the collection of arrows $\{\rho_a = (A \xleftarrow{a} * \rightarrow * )\}_{a\in A}$ induces an equivalence
\[
\rho : M(A) \simeq M(*)^A.
\]
We denote by \mdef{$\CMon{m}{p}(\C) \subset \PMon{m}{p}(\C)$} the $\infty$-categories of $p$-typical $m$-commutative monoids and pre-$m$-commutative monoids in $\C$.
\end{dfn}

There is a forgetful functor $\CMon{m}{p}(\C) \to \C$ by $M \mapsto M(*)$ and we think of the functor $M(-)$ as equipping $M(*)$ with extra structure.  A particularly accessible part of this structure is:

\begin{cnstr}\label{cnstr:cardinality}
Suppose $M \in \CMon{m}{p}(\C)$.  Then, given any $m$-truncated $p$-finite space $A$, the canonical morphism $(* \leftarrow A \rightarrow *)$ in $\Span(\Sfin{m}{p})$ determines a natural map
\begin{equation*}\label{eqn:card}
\mdef{|A|_M:  M(*)\to M(*)}.
\end{equation*}

This construction determines a natural endomorphism 
\[
|A|:\id_{\CMon{m}{p}(C)} \to \id_{\CMon{m}{p}(C)}
\]
of the identity functor of $\CMon{m}{p}(C)$, which we refer to as the \emph{cardinality of $A$}, because in the special case when $A$ is a finite set, it is given by multiplication by the cardinality of that set.  
\end{cnstr}


\subsection{Higher semiadditivity and height}\label{sub:prelim2}

 It turns out that the theory of $m$-commutative monoids is intimately related to $m$-semiadditivity.  To state the relationship cleanly, we need the following notion:

\begin{dfn}[{\cite[Definition 4.8.2.1]{HA}}]\label{dfn:idem}
Given a symmetric monoidal $\infty$-category $(\C, \one)$, we say a morphism $u: \one \to A\in \C$ exhibits $A$ as an idempotent object if $\id_A\otimes u: A \to A\otimes A$ is an equivalence.  In this case, $A$ admits a unique commutative algebra structure with unit $u$ and we call the resulting $A\in \CAlg(\C)$ an \mdef{idempotent algebra}.  
\end{dfn}

The central feature of idempotent algebras $A$ is that the forgetful functor $\Mod_A(\C)\to \C$ is fully faithful; that is, admitting the structure of an $A$-module is a property of an object of $\C$, and such an $A$-module structure is necessarily unique \cite[Proposition 4.8.2.10]{HA}.


We will be interested in idempotent algebras in the symmetric monoidal $\infty$-category $\PrL$ of presentable $\infty$-categories with the Lurie tensor product \cite[Proposition 4.8.1.15]{HA}. 

\begin{rmk}\label{rmk:idem}

Idempotent algebras $\mathcal{M}\in \CAlg(\PrL)$ are known as \emph{modes} and have been studied in \cite[\S 4.8.2]{HA}, \cite[\S 5]{CSYHeight}.  As explained in \Cref{dfn:idem}, admitting the structure of an $\mathcal{M}$-module is a property of an $\infty$-category $\C\in \PrL$ -- one says that $\mathcal{M}$ is a \emph{mode} classifying that property.  
For instance, the $\infty$-category $\Sp$ of spectra is idempotent and a presentable $\infty$-category is a module over $\Sp$ if and only if it is stable, so $\Sp$ is the mode classifying stability.  
\end{rmk}

\begin{prop}\label{prop:cmon_props}\hfill

\begin{enumerate}
\item The $\infty$-category $\CMon{m}{p}(\Spaces)$ is an idempotent algebra in $\PrL$.
\item An $\infty$-category $\C\in\PrL$  admits the structure of a module over $\CMon{m}{p}(\Spaces)$ if and only if $\C$ is $p$-typically $m$-semiadditive.
\item For $\C\in \PrL$, there is a natural equivalence of $\infty$-categories 
\[
\CMon{m}{p}(\Spaces) \otimes \C \simeq \CMon{m}{p}(\C).
\]
\item Consequently, an $\infty$-category $\C\in\PrL$ is $p$-typically $m$-semiadditive if and only if the forgetful functor $\CMon{m}{p}(\C) \to \C$ is an equivalence. 
\end{enumerate}
\end{prop}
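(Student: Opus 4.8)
The plan is to prove the four parts of \Cref{prop:cmon_props} in sequence, bootstrapping from the theory of modes recalled in \Cref{rmk:idem}, with parts (1)--(3) doing the real work and (4) being a formal consequence.

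First I would establish (3) as the technical heart, since (1) and (2) will follow from it together with general nonsense about idempotent algebras. The functor $\Sfin{m}{p} \to \Span(\Sfin{m}{p})$ exhibits $\Span(\Sfin{m}{p})$ as built from finite colimits in $\Spaces$ of the representables (more precisely, $\Span(\Sfin{m}{p})$ is generated under finite colimits and finite limits by the point, or one can invoke a description of $\PMon{m}{p}(\C) = \Fun(\Span(\Sfin{m}{p}), \C)$ as a Day-convolution-type tensor). The cleanest route: $\PMon{m}{p}(\C) = \Fun(\Span(\Sfin{m}{p}), \C)$, and for $\C \in \PrL$ one has a natural equivalence $\Fun(\Span(\Sfin{m}{p}), \Spaces) \otimes \C \simeq \Fun(\Span(\Sfin{m}{p}), \C)$ because $\Span(\Sfin{m}{p})$ is a small $\infty$-category and $\PrL$-tensoring with a presheaf category does exactly this (this is \cite[Remark 4.8.1.8, Proposition 4.8.1.17]{HA} together with the fact that $\Span(\Sfin{m}{p})$ is idempotent-complete or at least small). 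Then I would check that the full subcategory cut out by the Segal condition is preserved under $-\otimes\C$: the Segal maps $\rho: M(A) \to M(*)^A$ are assembled from finitely many morphisms in $\Span(\Sfin{m}{p})$, hence the localization $\PMon{m}{p} \to \CMon{m}{p}$ is a localization at a set of maps of the form $F_A \to F_*^{\times A}$ between corepresentable-type functors, and such localizations are compatible with $\PrL$-tensoring because the left adjoint $\PMon{m}{p}(\Spaces) \otimes \C \to \CMon{m}{p}(\Spaces) \otimes \C$ matches the localization on $\PMon{m}{p}(\C)$. This gives (3).

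Next, for (1), I would show $\CMon{m}{p}(\Spaces)$ is idempotent in $\PrL$: the unit $\Spaces \to \CMon{m}{p}(\Spaces)$ is the ``constant/trivial'' $m$-commutative monoid functor (left adjoint to evaluation at $*$), and by \Cref{dfn:idem} it suffices to check $\CMon{m}{p}(\Spaces) \otimes \CMon{m}{p}(\Spaces) \to \CMon{m}{p}(\Spaces)$ is an equivalence. By (3) the left side is $\CMon{m}{p}(\CMon{m}{p}(\Spaces))$, i.e. $m$-commutative monoids in $m$-commutative monoids, and the multiplication map is ``forget one layer''; the point is that an $m$-commutative monoid structure is unique when it exists (the underlying object of an $m$-commutative monoid already canonically carries one, since $\CMon{m}{p}(\Spaces)$ is $m$-semiadditive — colimits and limits over $\Sfin{m}{p}$ agree there), so iterating is idempotent. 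For (2), one direction: if $\C$ is a module over $\CMon{m}{p}(\Spaces)$ then $\C \simeq \CMon{m}{p}(\Spaces) \otimes \C \simeq \CMon{m}{p}(\C)$ by (3), and a standard argument (the unit/counit of the evaluation–trivial-monoid adjunction being an equivalence) shows $\C$ is $p$-typically $m$-semiadditive; conversely, $m$-semiadditivity of $\C$ says precisely that $\colim_A \simeq \lim_A$ for $A \in \Sfin{m}{p}$, which is exactly the statement that the forgetful functor $\CMon{m}{p}(\C) \to \C$ is an equivalence, i.e. $\C$ is local for the idempotent algebra, hence a module over it. Finally (4) is immediate by combining (2) with (3): $\C$ is $p$-typically $m$-semiadditive iff it is a $\CMon{m}{p}(\Spaces)$-module iff $\C \xrightarrow{\sim} \CMon{m}{p}(\Spaces)\otimes\C \simeq \CMon{m}{p}(\C)$, and unwinding, this last equivalence is the forgetful functor.

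The main obstacle I expect is (3) — specifically, verifying carefully that the Segal-condition localization is compatible with the Lurie tensor product, which requires knowing that the localization $L: \PMon{m}{p}(\C) \to \CMon{m}{p}(\C)$ exists (presentability plus an accessibility/small-generation argument for the Segal maps) and that $L$ is ``the same'' after tensoring, i.e. that the class of $\otimes$-maps generating the localization on $\PMon{m}{p}(\Spaces)$ generates the corresponding localization on $\PMon{m}{p}(\C)$ for every $\C$. This is the kind of statement that is true and essentially formal once set up correctly (it appears in \cite{CSYHeight,Harpaz}), but getting the bookkeeping right — and in particular checking the Segal maps are indexed by a small set and that $\CMon{m}{p}(\Spaces)$ is thereby presentable — is where the care is needed; everything downstream is a clean application of the idempotent-algebra formalism of \cite[\S 4.8.2]{HA}.
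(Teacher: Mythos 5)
Your outline of part (3) follows what is essentially the standard argument (and the one behind the reference the paper cites): identify $\PMon{m}{p}(\C)=\Fun(\Span(\Sfin{m}{p}),\C)$ with $\Fun(\Span(\Sfin{m}{p}),\Spaces)\otimes\C$ and check that the Segal localization, being generated by a small set of maps, is compatible with $-\otimes\C$. Note, though, that the paper itself does not reprove any of this: its proof is a citation (Harpaz \S 5.2 for (1)--(2), CSY Proposition 5.3.1 for (3), Harpaz Corollary 5.15 for (4)), with the remark that the $p$-typical case is identical. Measured as a self-contained argument, your proposal has a genuine gap in (1) and (2): you assert, parenthetically and without proof, that $\CMon{m}{p}(\Spaces)$ is itself $p$-typically $m$-semiadditive (``colimits and limits over $\Sfin{m}{p}$ agree there''). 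That statement is not formal bookkeeping; it is the main theorem of the cited work (equivalently, the higher semiadditivity of the span category $\Span(\Sfin{m}{p})$, resting on the Hopkins--Lurie integration/ambidexterity formalism). Without it, neither the idempotency argument in (1) nor the ``module $\Rightarrow$ semiadditive'' direction of (2) gets off the ground, since both reduce precisely to that input.

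The second gap is in the converse direction of (2), where you say that $m$-semiadditivity of $\C$ ``is exactly the statement that the forgetful functor $\CMon{m}{p}(\C)\to\C$ is an equivalence.'' It is not a tautology: $m$-semiadditivity is the property that the norm maps $\colim_A X \to \lim_A X$ are equivalences for $A\in\Sfin{m}{p}$, whereas the forgetful functor being an equivalence says that every object of $\C$ extends essentially uniquely to a \emph{coherent} functor out of $\Span(\Sfin{m}{p})$. Producing that coherent span functoriality from the norm-equivalence property is the substantive content of Harpaz's Corollary 5.15 (and of the Hopkins--Lurie construction), and it is literally item (4) of the proposition; as written, your argument is circular, since (2)-converse invokes (4) and (4) is then declared ``immediate from (2) and (3).'' To repair the proposal you must either import these two theorems as black boxes --- which is what the paper does, observing that the $p$-typical proofs are verbatim the same --- or actually reprove the universal property of $\Span(\Sfin{m}{p})$, which is far beyond the idempotent-algebra formalities your sketch describes. (A minor point: the unit $\Spaces\to\CMon{m}{p}(\Spaces)$ is the \emph{free} $m$-commutative monoid functor, i.e.\ the left adjoint to evaluation at $*$ as you say, not a ``constant/trivial'' monoid functor.)
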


\begin{proof}
In the non-$p$-typical case, (1) and (2) are discussed in \cite[\S 5.2]{Harpaz}, (3) is \cite[Proposition 5.3.1]{CSYHeight}, and (4) is \cite[Corollary 5.15]{Harpaz}; the proof in the $p$-typical case is identical.  
\end{proof}

Thus, in the language of \Cref{rmk:idem}, $\CMon{m}{p}(\Spaces)$ is the mode classifying $p$-typical $m$-semiadditivity.  A consequence of \Cref{prop:cmon_props}(4) is that if $\C$ is $p$-typically $m$-semiadditive, then the cardinality construction of \Cref{cnstr:cardinality} gives natural endomorphisms 
\[
|A|: \id_{\C} \to \id_{\C}
\]
for any $m$-truncated $p$-finite space $A$.  These invariants of $\C$ can be used to define the \emph{semiadditive height} of $\C$.

\begin{dfn}[{\cite[Definition 3.1.11]{CSYHeight}}]
For an $\infty$-category $\C$ and $\alpha : \id_{\C} \to \id_{\C}$ a natural endomorphism of the identity functor, we say that:
\begin{enumerate}
\item $X\in \C$ is \mdef{$\alpha$-divisible} if $\alpha_X$ is an equivalence.
\item $X\in \C$ is \mdef{$\alpha$-complete} if $\Map(Z,X) = 0 $ for all $\alpha$-divisible $Z$.  
\end{enumerate}
\end{dfn}
\begin{dfn}
We say that a $p$-typical $m$-semiadditive $\infty$-category $\C$ has \mdef{semiadditive height $n\leq m$} if
\begin{enumerate}
\item Any $X\in \C$ is $|B^{n-1}C_p|$-complete.\footnote{This automatically implies that $X$ is $|B^kC_p|$-complete for $k\leq n-1$, and in particular $p$-complete \cite[Proposition 3.1.9]{CSYHeight}.}
\item Any $X\in \C$ is $|B^n C_p|$-divisible.  
\end{enumerate}
\end{dfn}

\begin{rmk}[{\cite[Proposition 3.2.3, Remark 2.4.6]{CSYHeight}}]\label{rmk:trunc}
Intuitively, a $p$-typical $m$-semiadditive $\infty$-category $\C$ of semiadditive height $n$ ``only sees up to the $n$th homotopy group'' in the sense that for any $X\in \C$ and $n$-connected $p$-finite space $A$, the unique map $A\to *$ induces an equivalence
\[
X \to X^A.
\]
\end{rmk}

With these definitions, the stable $\infty$-semiadditive $\infty$-categories $\Sp_{K(n)}$ and $\Sp_{T(n)}$ have semiadditive height $n$.  This paper is concerned with the universal such $\infty$-category:

\begin{thm}[{\cite[Theorem F]{CSYHeight}}]
For $n\geq 0$, there exists an idempotent algebra $\tsadi_n \in \CAlg(\PrL)$ such that an $\infty$-category $\C \in \PrL$ admits the structure of a module over $\tsadi_n$ if and only if $\C$ is stable, $p$-local, and $\infty$-semiadditive of height $n$.  
\end{thm}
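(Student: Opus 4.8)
The plan is to realize $\tsadi_n$ by assembling it out of modes that are already available, using the principle (\Cref{rmk:idem}) that carrying a module structure over an idempotent algebra in $\PrL$ is a \emph{property}, and that the $\PrL$-tensor product of idempotent algebras is again idempotent and classifies the conjunction of the two properties. Three relevant modes are at hand: $\Sp$ (stability), $\Mod_{\bS_{(p)}}(\Sp)$ ($p$-locality), and, for each $m$, the mode $\CMon{m}{p}(\Spaces)$ classifying $p$-typical $m$-semiadditivity (\Cref{prop:cmon_props}). Since $-\otimes-$ on $\PrL$ preserves filtered colimits in each variable, and since tensoring the modes for $m$- and $k$-semiadditivity gives the mode for $\max(m,k)$-semiadditivity (a module over both is exactly a module over the latter, by \Cref{prop:cmon_props}), the filtered colimit $\mathcal{A}_\infty := \colim_m \CMon{m}{p}(\Spaces)$ --- formed in $\CAlg(\PrL)$ along the canonical maps of modes --- is again idempotent and is the mode classifying $p$-typical $\infty$-semiadditivity. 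Set $\mathcal{M} := \Sp \otimes \Mod_{\bS_{(p)}}(\Sp) \otimes \mathcal{A}_\infty$; this is the mode classifying ``stable, $p$-local, and $p$-typically $\infty$-semiadditive'' (and one checks that for stable $p$-local $\infty$-categories this last notion agrees with plain $\infty$-semiadditivity). It remains to impose the height-$n$ condition.

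For that, recall from \Cref{cnstr:cardinality} and \Cref{prop:cmon_props}(4) that any $\mathcal{A}_\infty$-module --- in particular $\mathcal{M}$ --- carries cardinality endomorphisms $|B^kC_p|\colon\id\to\id$ for all $k$, and these are \emph{monoidal} natural endomorphisms (they are built from the Segal/monoidal structure, with $|A\times B| = |A|\circ|B|$). The general device I would use is: for a mode $\mathcal{N}$ and a monoidal natural endomorphism $\alpha$ of $\id_\mathcal{N}$, (i) the full subcategory of $\alpha$-divisible objects is the smashing localization $\mathcal{N}[\alpha^{-1}] := \colim(\mathcal{N}\xrightarrow{\alpha}\mathcal{N}\xrightarrow{\alpha}\cdots)$, again a mode; and (ii) because monoidality makes the $\alpha$-divisible objects a $\otimes$-ideal, the class of $\alpha$-equivalences is closed under tensoring with arbitrary objects, so the Bousfield localization of $\mathcal{N}$ onto its $\alpha$-complete objects is a symmetric monoidal localization, hence again a mode, which I write $\mathcal{N}^{\wedge}_{\alpha}$. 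With these in hand, define
\[
\tsadi_n := \bigl(\mathcal{M}[\,|B^nC_p|^{-1}\,]\bigr)^{\wedge}_{|B^{n-1}C_p|}.
\]

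Now I would identify the modules over $\tsadi_n$. Since $\tsadi_n$ is built from $\mathcal{M}$ by localizations it is itself an $\mathcal{M}$-module, so every $\tsadi_n$-module is an $\mathcal{M}$-module and hence stable, $p$-local, and $\infty$-semiadditive. Inverting $|B^nC_p|$ makes every object of $\mathcal{M}[\,|B^nC_p|^{-1}\,]$ --- hence of $\tsadi_n$, since the completion is fully faithful and intertwines $|B^nC_p|$, which is already invertible upstairs --- $|B^nC_p|$-divisible, and the completion makes every object $|B^{n-1}C_p|$-complete; so every $\tsadi_n$-module has semiadditive height $n$. Conversely, if $\C$ is stable, $p$-local, and $\infty$-semiadditive of height $n$, then $\C$ is an $\mathcal{M}$-module in which every object is $|B^nC_p|$-divisible (height condition (2)) and $|B^{n-1}C_p|$-complete (height condition (1)); the first property says the localization of $\C$ induced by $\mathcal{M}[\,|B^nC_p|^{-1}\,]$ is trivial, so $\C$ is a module over it, and then the second property likewise promotes $\C$ to a $\tsadi_n$-module. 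This establishes the claimed ``if and only if''.

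The step I expect to be the main obstacle is (ii): showing that completion at $|B^{n-1}C_p|$ is a symmetric monoidal localization, so that it preserves the property of being a mode and is detected on modules object-wise. This rests on developing the theory of the cardinality transformations $|A|$ far enough to know they are genuinely monoidal natural endomorphisms (so that $\alpha$-divisible objects form a $\otimes$-ideal), and on checking that the divisibility-inverting and completeness localizations do not interfere with each other (the former survives the latter, as noted, by full faithfulness). One should also confirm $\tsadi_n$ is not the zero $\infty$-category; this is immediate, since $\Sp_{T(n)}$ is a nonzero $\infty$-category which is stable, $p$-local, and $\infty$-semiadditive (\Cref{thm:kuhn} and \cite{CSYTeleAmbi}) of height $n$, hence a nonzero $\tsadi_n$-module.
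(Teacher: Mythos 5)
Your strategy is viable, and since idempotent algebras in $\PrL$ are determined by the property their modules satisfy, any correct construction necessarily produces the same $\tsadi_n$; but note that the paper itself does not prove this statement — it is quoted from \cite{CSYHeight}, and the nearest thing to a proof here is \Cref{thm:tsadi_desc}, whose proof is again a citation. The genuine difference from Carmeli--Schlank--Yanovski's route is how $\infty$-semiadditivity enters: they build the mode out of \emph{$n$-semiadditivity only} (their model is the full subcategory of $\CMon{n}{p}(\Sp_{(p)})$ on the $|B^{n-1}C_p|$-complete, $|B^nC_p|$-divisible objects, as in \Cref{thm:tsadi_desc}) and then invoke their bootstrap theorem that a stable $p$-local $n$-semiadditive category of height $n$ is automatically $\infty$-semiadditive, whereas you impose $\infty$-semiadditivity by brute force, tensoring in the colimit $\colim_m \CMon{m}{p}(\Spaces)$ of semiadditivity modes. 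For the statement as phrased your shortcut is legitimate and avoids the hardest theorem of \cite{CSYHeight}, but it buys correspondingly less: you lose the description of $\tsadi_n$ by $n$-truncated spans, which is precisely what this paper needs downstream (\Cref{exm:tsadi_obj}, \Cref{thm:mainprecise}) to build objects of $\tsadi_1$ from functors on $\Span(\Sfin{1}{p})$.

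Two of your deferred steps should be flagged as real inputs rather than routine checks. First, the parenthetical ``one checks that $p$-typical $\infty$-semiadditivity agrees with plain $\infty$-semiadditivity for stable $p$-local categories'' is \cite[Theorem 3.2.6]{CSYHeight}: one must show that cardinalities of $\pi$-finite spaces with homotopy of order prime to $p$ are invertible in the $p$-local setting, which is an induction, not a formality. Second, the package you call (i)--(ii) — that inverting and completing at a cardinality are symmetric monoidal localizations, that such localizations of modes are again modes, and that their modules are detected objectwise — is exactly the mode-localization machinery of \cite[\S 5]{CSYHeight}; in particular your telescope notation treats the natural endomorphism $\alpha$ of $\id_{\mathcal{N}}$ as if it were a functor and should be read as the smashing localization onto the $\alpha$-divisible objects, and the claim that every object of every $\tsadi_n$-module is $|B^{n-1}C_p|$-complete needs the small extra argument that the telescope $\one[|B^{n-1}C_p|^{-1}]$ vanishes in the mode and maps, under the (colimit-preserving, unital) action functor, to the corresponding telescope in the module. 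With those inputs supplied, your argument goes through; it is best viewed not as a more elementary proof but as a reassembly of the same toolkit that trades the bootstrap theorem for the colimit of semiadditivity modes.
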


The $\infty$-category $\tsadi_n$ admits the following concrete construction.  

\begin{prop}\label{thm:tsadi_desc}
For $n\geq 0$, $\tsadi_n$ can be identified with the full subcategory
\[
\D \subset \CMon{n}{p}(\Sp_{(p)})
\]
of $p$-typical $n$-commutative monoids in $p$-local spectra which are $|B^{n-1}C_p|$-complete and $|B^nC_p|$-divisible.  
\end{prop}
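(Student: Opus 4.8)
The plan is to prove this by recognizing the right-hand side $\D$ as a mode in $\PrL$ --- i.e.\ an idempotent algebra in the sense of \Cref{dfn:idem} --- and checking that $\D$ classifies exactly the property that defines $\tsadi_n$, namely being stable, $p$-local, and $\infty$-semiadditive of height $n$; since the mode classifying a given property of presentable $\infty$-categories is unique \cite[\S 4.8.2]{HA}, this yields the asserted identification $\tsadi_n \simeq \D$. As a first step I would note that the ambient $\infty$-category is already a mode: by \Cref{prop:cmon_props}(3) one has $\CMon{n}{p}(\Sp_{(p)}) \simeq \CMon{n}{p}(\Spaces) \otimes \Sp_{(p)}$, the tensor product in $\PrL$ of the mode classifying $p$-typical $n$-semiadditivity (\Cref{prop:cmon_props}(1),(2)) with the mode $\Sp_{(p)}$ classifying stable, $p$-local presentable $\infty$-categories; hence $\CMon{n}{p}(\Sp_{(p)})$ is the mode whose modules are precisely the stable, $p$-local, $p$-typically $n$-semiadditive presentable $\infty$-categories.

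The second step is to show that the inclusion $\D \hookrightarrow \CMon{n}{p}(\Sp_{(p)})$ is a symmetric monoidal (in fact smashing) localization. On the unit object, the cardinalities $|B^{n-1}C_p|$ and $|B^nC_p|$ of \Cref{cnstr:cardinality} are endomorphisms of $\one$: passing to $|B^nC_p|$-divisible objects inverts one such endomorphism and is therefore a smashing localization, and --- this is where I would appeal to the structural results on cardinalities in \cite{CSYHeight} --- the subsequent passage to $|B^{n-1}C_p|$-complete objects is again a smashing localization of the result. Writing $R$ for the localization of the unit, we get $\D \simeq \Mod_R(\CMon{n}{p}(\Sp_{(p)}))$ with $R$ idempotent over $\CMon{n}{p}(\Sp_{(p)})$; since a mode $\mathcal{M}$ satisfies $\mathcal{M}\otimes_{\PrL}\mathcal{M}\simeq\mathcal{M}$, a formal base-change computation promotes this to idempotence of $\D$ over $\PrL$, so $\D$ is a mode, and its modules are exactly those stable, $p$-local, $p$-typically $n$-semiadditive $\C \in \PrL$ on which the localization acts trivially --- equivalently, those $\C$ in which every object is $|B^{n-1}C_p|$-complete and $|B^nC_p|$-divisible, i.e.\ the stable, $p$-local, $p$-typically $n$-semiadditive $\infty$-categories of semiadditive height $n$.

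For the final step I would invoke the ``bootstrap'' of Carmeli--Schlank--Yanovski \cite{CSYHeight}: a $p$-typically $n$-semiadditive $\infty$-category of semiadditive height $n$ is automatically $\infty$-semiadditive, bounded height forcing $m$-semiadditivity for all $m \geq n$ as well. Conversely, a stable, $p$-local, $\infty$-semiadditive $\infty$-category of height $n$ is in particular $p$-typically $n$-semiadditive of semiadditive height $n$. Therefore $\D$ and $\tsadi_n$ are modes classifying the same property of presentable $\infty$-categories, and the uniqueness of such a mode gives the desired equivalence $\tsadi_n \simeq \D$.

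I expect the main obstacle to be the monoidality assertion for the completion part of the localization in the second step: completions are not smashing in general (for instance, $p$-completion of spectra is not a smashing localization of $\Sp$), so one genuinely needs the special behavior of the cardinality operations $|B^kC_p|$ in an $n$-semiadditive, $|B^nC_p|$-inverted setting, as analyzed in \cite{CSYHeight}. That input, together with the bootstrap to $\infty$-semiadditivity, carries the real content; the remaining manipulations of modes and their module categories are formal.
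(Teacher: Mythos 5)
Your overall route---exhibit $\D$ as a mode whose modules are exactly the stable, $p$-local, $p$-typically semiadditive presentable $\infty$-categories of height $n$, then use the height-$n$ bootstrap and the uniqueness of modes---is exactly the content of \cite[Theorem 5.3.6]{CSYHeight}, and the paper's proof consists of citing that proof together with \cite[Theorem 3.2.6]{CSYHeight} (the comparison of $p$-typical with full semiadditivity, which your last step uses silently when you pass from ``$p$-typically $n$-semiadditive of height $n$'' to ``$\infty$-semiadditive of height $n$''). So the architecture matches; the issue is with how you establish that $\D$ is a mode.

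The claim that the passage to $|B^{n-1}C_p|$-complete objects is a \emph{smashing} localization is not correct, and your subsequent argument leans on it: you write $\D \simeq \Mod_R(\CMon{n}{p}(\Sp_{(p)}))$ for an idempotent algebra $R$ and deduce idempotence of $\D$ in $\PrL$ by base change, which requires exactly this smashing-ness. Already at $n=1$ the completion in question is $p$-completion ($|B^0C_p|=p$), and the inclusion $\tsadi_1 \subset \CMon{1}{p}(\Sp_{(p)})$ cannot preserve infinite coproducts (underlying spectra of objects of $\tsadi_1$ are $p$-complete, and $U_{\tsadi_1}$ preserves limits and the unit, so an infinite coproduct formed in the ambient category would fail completeness); a smashing localization would force the local objects to be closed under colimits. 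What is true, and what \cite{CSYHeight} actually uses, is weaker but sufficient: completion and divisibilization with respect to a natural endomorphism of the identity (\Cref{cnstr:cardinality}) are localizations compatible with the symmetric monoidal structure, and their general results on localizations of modes (\cite[\S 5.2]{CSYHeight}) show that the complete part and the divisible part of a mode are again modes, with the expected description of their module categories---no idempotent algebra $R$ and no smashing hypothesis are available or needed for the completion step. With that substitution (or simply with the citation the paper makes), your argument goes through; as written, the second step has a genuine gap precisely at the point you flagged as the main obstacle.
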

\begin{proof}
Note that by the proof of \cite[Theorem 5.3.6]{CSYHeight}, $\D$ is the mode classifying the property of being stable, $p$-local, and \emph{$p$-typically} $\infty$-semiadditive of height $n$.   But by \cite[Theorem 3.2.6]{CSYHeight}, $p$-typical $n$-semiadditivity coincides with $n$-semiadditivity for any presentable $p$-local $0$-semiadditive category, so this coincides with the property classified by $\tsadi_n$.  
\end{proof}

\begin{exm}
In the case $n=0$, we have that $\CMon{0}{p}(\Sp_{(p)}) = \mathrm{CMon}(\Sp_{(p)}) = \Sp_{(p)}$, and $\tsadi_0 \subset \Sp_{(p)}$ is the full subcategory on which $|B^0C_p| = p$ acts invertibly, i.e., $\tsadi_0 = \Sp_{\Q}$.  
\end{exm}

\subsection{The case of height 1}\label{sub:ht1}\hfill

To motivate our main construction, we now reflect on the case of height $1$ (where the construction takes place).


\begin{exm}\label{exm:tsadi_obj}
In the case $n=1$, \Cref{thm:tsadi_desc} implies that the data of an object $X\in \tsadi_1$ is a functor $X:  \Span(\Sfin{1}{p}) \to \Sp_{(p)}$ with the following properties:
\begin{enumerate}
\item $X(*)$ is $p$-complete (as  $|B^0C_p| = p$).  
\item The map $|BC_p| : X(*) \to X(*)$ induced by the span $(*\leftarrow BC_p \rightarrow *)$ is an equivalence.
\item The functor $X$ satisfies the Segal condition (cf. \Cref{dfn:cmonm}).  
\end{enumerate}
Via this description, the ``underlying'' functor $U_{\tsadi_1} : \tsadi_1 \to \Sp$ takes such a functor $X$ to the spectrum $X(*)$.  
\end{exm}

Having unwound the definition in this way, we see that objects of $\tsadi_1$ are not altogether unfamiliar.  

\begin{rmk}
The objects of $\Sfin{1}{p}$ are simply groupoids with $\pi_0$ finite and $\pi_1$ a finite $p$-group.  Thus, functors $\Span(\Sfin{1}{p}) \to \Sp$ are closely related to global equivariant spectra (in the sense of Schwede \cite{Schwede}) with the additional structure of \emph{deflations} or \emph{exotic transfers}, corresponding to the functoriality in spans such as $(BG \xleftarrow{\id} BG \to *)$.  
\end{rmk}

Motivated by this, we may describe the higher commutative monoid underlying $\KU_p$ in terms of global equivariant $K$-theory.

\begin{exm}\label{exm:KUp}
The $p$-completed complex $K$-theory spectrum $\KU_p$ is $T(1)$-local and therefore, by \Cref{prop:cmon_props}(4), it extends canonically and uniquely to a $p$-typical $1$-commutative monoid.
This $1$-commutative monoid has the following concrete description: letting $\Vect_{\bC}^{\fin}$ denote the (ordinary) category of finite dimensional complex vector spaces, one has a functor
\[
\Vect_{\bC}^{\fin}(-): \Span(\Sfin{1}{p}) \to \Cat_{\infty}
\]
defined on objects by $A\mapsto \Fun(A, \Vect_{\bC}^{\fin})$ and on a morphism $(B \xleftarrow{f} A \xrightarrow{g} C)$ by the functor
\[
g_{!}f^* : \Fun(B, \Vect_{\bC}^{\fin}) \to \Fun(C, \Vect_{\bC}^{\fin})
\]
of restriction along $f$ followed by left Kan extension along $g$.  Applying group completion and $K(1)$-localization point-wise, we obtain a functor 
\[
\KU_p(-) : \Span(\Sfin{1}{p}) \to \Sp.
\]
By Suslin's theorem \cite{Suslin1}, this functor satisfies $\KU_p(BG) \simeq \KU^G_p$, the $p$-complete $G$-equivariant $K$-theory spectrum.  By the Atiyah--Segal completion theorem, $\KU^G_p$ can be identified with $\KU_p^{BG}$ (as $G$ is a finite $p$-group); in other words, the functor $\KU_p(-)$ satisfies the Segal condition and therefore is the (necessarily unique) $1$-commutative monoid structure on $\KU_p$.  

This construction gives a computational handle on the $1$-commutative monoid structures on $K(1)$-local spectra.   For example, under $\Vect_{\bC}^{\fin}(-)$, the span $(*\leftarrow BG \rightarrow *)$ is sent to the composite
\[
\Vect_{\bC}^{\fin} \xrightarrow{\mathrm{triv}_G} \Fun(BG, \Vect_{\bC}^{\fin}) \xrightarrow{-/G} \Vect_{\bC}^{\fin}
\]
of giving a complex vector space the trivial $G$-action followed by quotienting by $G$. Since this composite is naturally the identity, we conclude that $|BG|$ is the identity on $\KU_p$.   In fact,  this technique can be extended to compute power operations and related phenomena in the $K(1)$-local sphere, cf. \cite{CY}.  

%
\end{exm}

The main construction behind \Cref{thm:main} is a variant of \Cref{exm:KUp} which replaces vector spaces with finite sets.  Hence, we dedicate the following section to generalizing this example.

\subsection{Higher commutative monoids from categories} \label{sub:technical}

\begin{ntn}
Let \mdef{$\Cat_{\Sfin{m}{p}}$} denote the $\infty$-category of small $\infty$-categories which admit $\Sfin{m}{p}$-shaped colimits and $\Sfin{m}{p}$-colimit preserving functors between them.  
\end{ntn}

Here, we give a procedure which associates to each $\C \in \Cat_{\Sfin{m}{p}}$ a certain functor 
\begin{align*}
\underline{K}(\C)(-) : \Span(\Sfin{m}{p}) \to \Sp.\\
A\mapsto ((\C^A)^{\simeq})^{\gp}
\end{align*}
In fact, the construction will be compatible with multiplicative structure, which we discuss in \S \ref{subsub:mult}.  We remark that the procedure here is a primitive version of the technology of \cite{ShayTomer} and \cite{CY}.

\begin{cnstr}\label{exm:catsemiadd}
The $\infty$-category $\Cat_{\Sfin{m}{p}}$ is $p$-typically $m$-semiadditive (\cite[Proposition 5.25]{Harpaz}, \cite[Proposition 2.2.7]{CSYHeight}).  Thus, by  \Cref{prop:cmon_props}(4), every $\C \in \Cat_{\Sfin{m}{p}}$ acquires a canonical and unique lift to a $p$-typical $m$-commutative monoid
\begin{align*}
\C^{(-)}: \Span(\Sfin{m}{p}) &\to \Cat_{\Sfin{m}{p}}, \\
A &\mapsto \C^A
\end{align*}
which we refer to as the \emph{coCartesian} higher commutative monoid structure.  On morphisms, $\C^{(-)}$ sends a span $(A\xleftarrow{f}B\xrightarrow{g} C)$ to the functor 
\[
g_!f^* : \C^A \to \C^C
\]
of restriction along $f$ followed by left Kan extension along $g$ \cite[\S 5]{ShayTomer}.  
\end{cnstr}

At this point, it remains to pass to $K$-theory (i.e., group completion) pointwise.  However, in preparation to analyze the multiplicative structure, we do this carefully in steps:

\begin{cnstr}\label{cnstr:k-functor}
Consider the functor $\underline{K}: \Cat_{\Sfin{m}{p}} \to \PMon{m}{p}(\Sp)$ defined by the composite
\begin{align*}
\Cat_{\Sfin{m}{p}} \xrightarrow{\simeq} \CMon{m}{p}(\Cat_{\Sfin{m}{p}}) \xrightarrow{\simeq} \CMon{m}{p}(\mathrm{CMon}(\Cat_{\Sfin{m}{p}})) &\xrightarrow{\subset} \PMon{m}{p}(\mathrm{CMon}(\Cat_{\Sfin{m}{p}}))\\
&\xrightarrow{(-)^{\simeq}} \PMon{m}{p}(\mathrm{CMon}(\Spaces))\\
&\xrightarrow{(-)^{\gp}} \PMon{m}{p}(\Sp)
\end{align*}
where
\begin{enumerate}
\item The first arrow is the coCartesian $p$-typical $m$-commutative monoid structure of \Cref{exm:catsemiadd}.
\item The second arrow is by applying \Cref{prop:cmon_props}, noting that there is an equivalence
\[
\CMon{m}{p}(\Spaces) = \CMon{m}{p}(\Spaces)\otimes \mathrm{CMon}(\Spaces)
\] 
because $\CMon{m}{p}(\Spaces)$ is semiadditive and therefore a module over the idempotent algebra $\mathrm{CMon}(\Spaces) = \CMon{0}{p}(\Spaces)$.
\item The third arrow is the canonical inclusion of $m$-commutative monoids into pre-$m$-commutative monoids.
\item The fourth arrow is by post-composition with the functor 
\[
\mathrm{CMon}(\Cat_{\Sfin{m}{p}}) \to \mathrm{CMon}(\Spaces)
\]
induced by taking maximal subgroupoid (noting that this preserves products).  
\item The final arrow is by post-composition with the group completion functor
\[
(-)^{\gp}:\mathrm{CMon}(\Spaces) \to \Sp.
\] 
 \end{enumerate}
\end{cnstr}

\begin{ntn}
Moving forward, we denote by \mdef{$K:\mathrm{CMon}(\Cat_{\Sfin{m}{p}}) \to \Sp$} the composite
\[
\mathrm{CMon}(\Cat_{\Sfin{m}{p}}) \xrightarrow{(-)^{\simeq}} \mathrm{CMon}(\Spaces) \xrightarrow{(-)^{\gp}} \Sp.
\]
\end{ntn}


\subsubsection{Multiplicative structure}\label{subsub:mult}

The Cartesian product of spaces gives $\Sfin{m}{p}$ a symmetric monoidal structure; this in turn induces a symmetric monoidal structure on $\Span(\Sfin{m}{p})$ which is given on objects by the formula $(A,B)\mapsto A\times B$.  Note that this is \emph{not} the Cartesian symmetric monoidal structure on $\Span(\Sfin{m}{p})$, which would be given by disjoint union at the level of spaces.  

\begin{cnstr}\label{cnstr:cmonsym}
For any presentably symmetric monoidal $\infty$-category $\C\in \CAlg(\PrL)$, the Day convolution endows $\PMon{m}{p}(\C) := \Fun(\Span(\Sfin{m}{p}),\C)$ with a symmetric monoidal structure (\cite{SaulDay}, \cite[Proposition 2.2.6.16]{HA}).   Moreover, the localization 
\[
L^{\mathrm{seg}} : \PMon{m}{p}(\C) \to \CMon{m}{p}(\C)
\]
which is left adjoint to the inclusion is compatible with this Day convolution monoidal structure \cite[Proposition 4.25]{ShayTomer}, and therefore $\CMon{m}{p}(\C)$ inherits a symmetric monoidal structure by \cite[Proposition 2.2.1.9]{HA}.  With these monoidal structures, $L^{\mathrm{seg}}$ is symmetric monoidal and the natural inclusion $\CMon{m}{p}(\C)\subset \PMon{m}{p}(\C) $ is lax symmetric monoidal \cite[Corollary 7.3.2.7]{HA}.  
\end{cnstr}

\begin{rmk}
There is, a priori, a different symmetric monoidal structure on $\CMon{m}{p}(\C)$ in the picture.  As $\CMon{m}{p}(\Spaces)$ is an idempotent algebra in $\PrL$ by \Cref{prop:cmon_props}, it admits a canonical commutative algebra structure and induces a commutative algebra structure on 
\[
\CMon{m}{p}(\C) = \CMon{m}{p}(\Spaces) \otimes \C
\]
for any $\C\in \CAlg(\PrL)$.  However, by work of Ben-Moshe--Schlank \cite[Theorem 4.27]{ShayTomer}, this symmetric monoidal structure coincides with the one of \Cref{cnstr:cmonsym}.  We may therefore refer to \emph{the} symmetric monoidal structure on $\CMon{m}{p}(\C)$, for $\C\in\CAlg(\PrL)$, without any ambiguity.
\end{rmk}

  We will be interested in the case $\C = \Cat_{\Sfin{m}{p}}$, which has a symmetric monoidal structure given by the Lurie tensor product (for the class of $\Sfin{m}{p}$-colimits)  \cite[Proposition 4.8.1.15]{HA}.  

\begin{prop}
The functor $\underline{K}: \Cat_{\Sfin{m}{p}} \to \PMon{m}{p}(\Sp)$ of \Cref{cnstr:k-functor} admits a canonical lax symmetric monoidal structure.  
\end{prop}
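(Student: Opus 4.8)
The plan is to exhibit $\underline{K}$ as a composite of functors, each of which is either symmetric monoidal or lax symmetric monoidal, following the six-step factorization of \Cref{cnstr:k-functor}. The only real content is to upgrade each arrow in that composite to the (lax) symmetric monoidal setting; once that is done, the result follows because a composite of lax symmetric monoidal functors is lax symmetric monoidal.

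\begin{proof}
We upgrade each of the arrows in the composite defining $\underline{K}$ in \Cref{cnstr:k-functor} to a (lax) symmetric monoidal functor.

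First, the coCartesian $m$-commutative monoid structure $\Cat_{\Sfin{m}{p}} \xrightarrow{\simeq} \CMon{m}{p}(\Cat_{\Sfin{m}{p}})$ is symmetric monoidal: both sides are $\CMon{m}{p}(\Spaces)$-modules in $\CAlg(\PrL)$ (using that $\Cat_{\Sfin{m}{p}}$ is $p$-typically $m$-semiadditive), the functor is the unit of the idempotent algebra $\CMon{m}{p}(\Spaces)$ tensored with $\Cat_{\Sfin{m}{p}}$, and by the remark following \Cref{cnstr:cmonsym} the resulting symmetric monoidal structure on $\CMon{m}{p}(\Cat_{\Sfin{m}{p}})$ agrees with the Day convolution one. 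The same reasoning handles the second arrow $\CMon{m}{p}(\Cat_{\Sfin{m}{p}}) \xrightarrow{\simeq} \CMon{m}{p}(\mathrm{CMon}(\Cat_{\Sfin{m}{p}}))$: it is obtained by tensoring the idempotent algebra $\CMon{m}{p}(\Spaces) = \CMon{m}{p}(\Spaces) \otimes \mathrm{CMon}(\Spaces)$ against $\Cat_{\Sfin{m}{p}}$ and applying \Cref{prop:cmon_props}(3), so it is symmetric monoidal. The third arrow, the inclusion $\CMon{m}{p}(-) \subset \PMon{m}{p}(-)$, is lax symmetric monoidal by \Cref{cnstr:cmonsym} (it is the right adjoint to the symmetric monoidal localization $L^{\mathrm{seg}}$, hence lax symmetric monoidal by \cite[Corollary 7.3.2.7]{HA}).

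For the fourth and fifth arrows, it suffices to observe that $(-)^{\simeq} : \mathrm{CMon}(\Cat_{\Sfin{m}{p}}) \to \mathrm{CMon}(\Spaces)$ and $(-)^{\gp} : \mathrm{CMon}(\Spaces) \to \Sp$ are each symmetric monoidal: the maximal subgroupoid functor $\Cat_{\Sfin{m}{p}} \to \Spaces$ preserves finite products (indeed all limits) and sends the tensor-unit to the tensor-unit, so it is (lax) symmetric monoidal for the relevant monoidal structures, and group completion $\mathrm{CMon}(\Spaces) \to \Sp$ is symmetric monoidal as the left adjoint to a lax symmetric monoidal forgetful functor. Postcomposition with a (lax) symmetric monoidal functor is a (lax) symmetric monoidal operation on $\PMon{m}{p}(-) = \Fun(\Span(\Sfin{m}{p}), -)$ equipped with Day convolution, since Day convolution is functorial in (lax symmetric monoidal functors between) the target category. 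Assembling these, $\underline{K}$ is a composite of three symmetric monoidal, one lax symmetric monoidal, and two further (postcomposition-induced) lax symmetric monoidal functors, hence lax symmetric monoidal.

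The step I expect to require the most care is the compatibility of the first two ``identity'' equivalences with the symmetric monoidal structures — i.e., checking that the coCartesian higher commutative monoid structure, which a priori is just an equivalence of underlying $\infty$-categories coming from idempotence, is in fact an equivalence of \emph{commutative algebras} in $\PrL$ and thus respects Day convolution. This is where one genuinely uses the identification of the two symmetric monoidal structures on $\CMon{m}{p}(\C)$ (the idempotent-tensor one and the Day convolution one) from \cite[Theorem 4.27]{ShayTomer}; everything else is formal manipulation of lax symmetric monoidal functors and Day convolution.
\end{proof}
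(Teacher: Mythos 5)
Your proposal is correct and follows essentially the same route as the paper: the same six-arrow decomposition, with the first two arrows symmetric monoidal as unit maps of idempotent algebras (implicitly using the Ben-Moshe--Schlank identification of the mode tensor structure with Day convolution, which you rightly make explicit), the inclusion lax monoidal via the symmetric monoidal localization $L^{\mathrm{seg}}$, and the last two arrows handled by postcomposition once $K$ is lax symmetric monoidal. The one place where the paper is more careful is the maximal subgroupoid step: since $\Cat_{\Sfin{m}{p}}$ carries the Lurie tensor product rather than the Cartesian one, ``preserves finite products and the unit'' does not by itself produce the lax structure; the paper instead factors $(-)^{\simeq}$ through $\Cat_\infty$ with its Cartesian structure, where the inclusion $\Cat_{\Sfin{m}{p}} \to \Cat_\infty$ is lax symmetric monoidal essentially by the universal property of the tensor product (the canonical functor $\C \times \DD \to \C \otimes \DD$), and $(-)^{\simeq}:\Cat_\infty \to \Spaces$ is lax symmetric monoidal because its left adjoint, the inclusion of $\Spaces$, is symmetric monoidal. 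Your conclusion at that step is nonetheless the correct (and standard) one, so this is a matter of precision rather than a gap.
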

\begin{proof}
It suffices to check that each of the arrows of the composite of \Cref{cnstr:k-functor} admits a canonical lax symmetric monoidal structure.  The first two arrows are symmetric monoidal because they are induced by unit maps of idempotent algebras in $\PrL$.  The third arrow is lax symmetric monoidal by \Cref{cnstr:cmonsym}.  The last two arrows are given by post-composition, so by the properties of Day convolution \cite[Proposition 2.2.6.16]{HA}, it suffices to show that the functor $K$ given by the composite
\[
\mathrm{CMon}(\Cat_{\Sfin{m}{p}}) \xrightarrow{(-)^{\simeq}} \mathrm{CMon}(\Spaces) \xrightarrow{(-)^{\gp}} \Sp
\]
is lax symmetric monoidal.  The functor $(-)^{\gp}$ is symmetric monoidal, so it suffices to show it for $(-)^{\simeq}$; but $(-)^{\simeq}$ arises by applying $\mathrm{CMon}$ to the composite 
\[
\Cat_{\Sfin{m}{p}} \xrightarrow{\mathrm{incl}} \Cat_{\infty} \xrightarrow{(-)^{\simeq}} \Spaces
\]
so again by properties of Day convolution, it suffices to show that each of these are lax symmetric monoidal.  But the first is essentially by definition, and the second is because it has a symmetric monoidal left adjoint, given by inclusion.
\end{proof}

Therefore, the functor of \Cref{cnstr:k-functor} takes commutative algebras to commutative algebras.  Recalling that commutative algebras for the Day convolution are simply lax symmetric monoidal functors, we may summarize our work in this section as follows:

\begin{cor}\label{cor:full-k-functor}
Let $\C\in \CAlg(\Cat_{\Sfin{m}{p}})$.  Then we may functorially associate to $\C$ a lax symmetric monoidal functor
\[
\underline{K}(\C)(-) : \Span(\Sfin{m}{p}) \to \Sp
\]
given on objects by $\underline{K}(\C)(A) = ((\C^A)^{\simeq})^{\gp}$ and sending a morphism $(A\xleftarrow{f}B\xrightarrow{g} C)$ to the map induced by the functor $g_!f^*$ of restriction along $f$ followed by left Kan extension along $g$.
\end{cor}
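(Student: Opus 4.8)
The plan is to read off Corollary \ref{cor:full-k-functor} from the lax symmetric monoidal enhancement of $\underline{K}$ supplied by the preceding Proposition, together with the standard identification of commutative algebras for Day convolution with lax symmetric monoidal functors. I would carry this out in three steps, of which only the last requires genuine care.

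First, any lax symmetric monoidal functor $F\colon \C\to\DD$ of symmetric monoidal $\infty$-categories induces a functor $\CAlg(F)\colon\CAlg(\C)\to\CAlg(\DD)$ on commutative algebra objects, functorially in $F$. Applying this to $\underline{K}\colon\Cat_{\Sfin{m}{p}}\to\PMon{m}{p}(\Sp)$ yields a functor $\CAlg(\Cat_{\Sfin{m}{p}})\to\CAlg(\PMon{m}{p}(\Sp))$, which provides the asserted functoriality in $\C$. Second, by the theory of Day convolution (\cite{SaulDay}, \cite[\S 2.2.6]{HA}), a commutative algebra object of $\PMon{m}{p}(\Sp)=\Fun(\Span(\Sfin{m}{p}),\Sp)$ equipped with the Day convolution structure of \Cref{cnstr:cmonsym} is the same data as a lax symmetric monoidal functor from $\Span(\Sfin{m}{p})$ (with $A\times B$ on objects) to $\Sp$ (with the smash product), compatibly with the passage to underlying functors. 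Combined with the first step, this turns $\CAlg(\underline{K})(\C)$ into a lax symmetric monoidal functor $\underline{K}(\C)(-)\colon\Span(\Sfin{m}{p})\to\Sp$.

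Finally, one identifies the underlying functor and its effect on morphisms by tracing through the composite of \Cref{cnstr:k-functor}. Its first three arrows --- the equivalence onto $\CMon{m}{p}(\Cat_{\Sfin{m}{p}})$ via the coCartesian structure, the equivalence onto $\CMon{m}{p}(\mathrm{CMon}(\Cat_{\Sfin{m}{p}}))$ of \Cref{prop:cmon_props}, and the inclusion $\CMon{m}{p}\subset\PMon{m}{p}$ --- leave the underlying functor $\Span(\Sfin{m}{p})\to\Cat_{\Sfin{m}{p}}$ unchanged, and by \Cref{exm:catsemiadd} that functor is the coCartesian one: $A\mapsto\C^A$ on objects and $(A\xleftarrow{f}B\xrightarrow{g}C)\mapsto g_!f^*$ on morphisms. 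The last two arrows are post-composition with $(-)^{\simeq}$ and $(-)^{\gp}$, hence applied pointwise, giving $\underline{K}(\C)(A)=((\C^A)^{\simeq})^{\gp}$ and, on a span, the map induced by $g_!f^*$ --- well-defined on group completions since $f^*$ and $g_!$ preserve coproducts.

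The only subtlety, and the main obstacle, is checking that the equivalence $\CMon{m}{p}(\Cat_{\Sfin{m}{p}})\simeq\CMon{m}{p}(\mathrm{CMon}(\Cat_{\Sfin{m}{p}}))$ is compatible with the forgetful functors down to $\Fun(\Span(\Sfin{m}{p}),\Cat_{\Sfin{m}{p}})$ --- i.e. that it merely records the commutative monoid structure that each $\C^A$ already carries by semiadditivity rather than twisting the underlying diagram. This is immediate from \Cref{prop:cmon_props}(3)--(4), since the unit $\mathrm{CMon}(\Cat_{\Sfin{m}{p}})\to\Cat_{\Sfin{m}{p}}$ of the idempotent algebra $\mathrm{CMon}(\Spaces)$ is an equivalence restricting to the identity on underlying objects. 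Once this bookkeeping is in place, everything else is formal.
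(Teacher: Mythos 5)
Your proposal is correct and follows essentially the same route as the paper: the paper deduces the corollary in exactly this way, by noting that the lax symmetric monoidal structure on $\underline{K}$ from the preceding proposition sends commutative algebras to commutative algebras, identifying commutative algebras in $\PMon{m}{p}(\Sp)$ under Day convolution with lax symmetric monoidal functors $\Span(\Sfin{m}{p})\to\Sp$, and reading off the values on objects and spans from \Cref{cnstr:k-functor} and the coCartesian structure of \Cref{exm:catsemiadd}. Your final bookkeeping point (that the equivalence $\CMon{m}{p}(\Cat_{\Sfin{m}{p}})\simeq\CMon{m}{p}(\mathrm{CMon}(\Cat_{\Sfin{m}{p}}))$ does not change the underlying diagram, since the forgetful functor $\mathrm{CMon}(\Cat_{\Sfin{m}{p}})\to\Cat_{\Sfin{m}{p}}$ is an equivalence by semiadditivity) is exactly the implicit step the paper relies on.
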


\section{The $1$-commutative structure on $\bS_p$}\label{sec:cnstr}

We now make our main construction, which is a variant of \Cref{exm:KUp} with complex vector spaces replaced by finite sets.  By \Cref{exm:tsadi_obj}, \Cref{thm:main} will follow easily from the following more concrete statement: 

\begin{prop}\label{thm:mainprecise}
There exists a lax symmetric monoidal functor $\bS^{\tsadi_1}_p: \Span(\Sfin{1}{p}) \to \Sp_{(p)}$ such that:
\begin{enumerate}
\item There is an equivalence $\bS^{\tsadi_1}_p (*) = \bS_p$.  
\item The image of the span $(*\leftarrow BC_p \rightarrow *)$ under the functor $\bS^{\tsadi_1}_p$ is homotopic to the identity (in particular, an equivalence).
\item The functor $\bS^{\tsadi_1}_p$ satisfies the Segal condition (cf. \Cref{dfn:cmonm}).
\end{enumerate}
\end{prop}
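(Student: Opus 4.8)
The plan is to carry out the variant of \Cref{exm:KUp} in which the category $\Vect_{\bC}^{\fin}$ of complex vector spaces is replaced by the category $\Fin$ of finite sets, equipped with its coproduct symmetric monoidal structure. The first point to check is that $\Fin \in \CAlg(\Cat_{\Sfin{1}{p}})$: here one must observe that $\Fin$, regarded as an $\infty$-category, does admit $\Sfin{1}{p}$-shaped colimits, but that --- unlike in $\Spaces$ --- the colimit of a diagram $F\colon \coprod_i BG_i \to \Fin$ is the finite set $\coprod_i (F_i)/G_i$ of orbits; one verifies the universal property using that $S^{hG}\simeq S^G$ for a finite discrete $G$-set $S$. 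Since $\sqcup$ preserves these colimits in each variable, $\Fin$ is a commutative algebra object of $\Cat_{\Sfin{1}{p}}$, and \Cref{cor:full-k-functor} produces a lax symmetric monoidal functor $\underline{K}(\Fin)(-)\colon \Span(\Sfin{1}{p})\to\Sp$ with $\underline{K}(\Fin)(A)=(\Fun(A,\Fin)^{\simeq})^{\gp}$ and with $g_!f^*$ on morphisms, where $g_!$ is left Kan extension \emph{computed in $\Fin$} (the ``orbit'' pushforward). I would then set $\bS^{\tsadi_1}_p := \underline{K}(\Fin)(-)^{\wedge}_p$, the pointwise $p$-completion; since $p$-completion is a monoidal localization $\Sp\to\Sp_{(p)}$, this remains lax symmetric monoidal and valued in $\Sp_{(p)}$.

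Properties (1) and (2) then come out formally. For (1): $\underline{K}(\Fin)(*)=(\Fin^{\simeq})^{\gp}\simeq\bS$ by the Barratt--Priddy--Quillen theorem, so $\bS^{\tsadi_1}_p(*)\simeq\bS_p$. For (2): in the span $(*\leftarrow BC_p\to *)$ both legs are the projection $g\colon BC_p\to *$, so the associated endofunctor of $\Fin$ is $g_!f^*$, where $f^*$ equips a finite set with the trivial $C_p$-action and $g_!=\colim_{BC_p}(-)$ is passage to orbits in $\Fin$; the composite is canonically the identity functor of $\Fin$, so applying $\underline{K}$ and $p$-completing gives the identity of $\bS_p$ (not merely an equivalence).

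Property (3), the Segal condition, is the heart of the matter. Because both $\underline{K}(\Fin)$ and $(-)^A$ carry disjoint unions of spaces to products, one reduces to the case of a connected $A$, i.e.\ $A=BG$ with $G$ a finite $p$-group. There the Segal comparison map $\bS^{\tsadi_1}_p(BG)\to\bS^{\tsadi_1}_p(*)^{BG}$ should be identified, via the equivariant Barratt--Priddy--Quillen theorem, with the $p$-completion of the canonical ``genuine-to-Borel'' comparison map
\[
(\bS_G)^G \longrightarrow (\bS_G)^{hG} \simeq \bS^{hG}
\]
for the $G$-equivariant sphere spectrum $\bS_G$: indeed $\bS^{\tsadi_1}_p(BG)\simeq((\bS_G)^G)^{\wedge}_p$ is the $p$-completed $K$-theory of finite $G$-sets, while $\bS^{\tsadi_1}_p(*)^{BG}=(\bS_p)^{hG}\simeq(\bS^{hG})^{\wedge}_p$ (using that $(-)^{hG}$ commutes with $p$-completion for finite $G$, as $\bS/p^n$ is dualizable). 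Now the Segal conjecture --- Carlsson's theorem \cite{Carlsson} --- says precisely that $(\bS_G)^G\to\bS^{hG}$ becomes an equivalence after completion at the augmentation ideal $I\subset A(G)$ of the Burnside ring. Finally, since $G$ is a $p$-group, $A(G)\otimes\F_p$ is a local ring with nilpotent maximal ideal $I\otimes\F_p$, so completion at $(I,p)$ agrees with $p$-completion on $A(G)$-modules; hence $((\bS_G)^G)^{\wedge}_p\simeq(((\bS_G)^G)^{\wedge}_I)^{\wedge}_p\simeq(\bS^{hG})^{\wedge}_p$, which is the desired equivalence.

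I expect the main obstacle to be exactly this third step, in two guises. The conceptual input --- the Segal conjecture --- is the essential ingredient, and the comparison of $I$-adic with $p$-adic completion for $p$-groups is a further (if classical) point. But there is also real bookkeeping in identifying the abstractly-defined Segal comparison map (built from the span-functoriality and the chain of equivalences in \Cref{cnstr:k-functor}) with the classical genuine-to-Borel map $(\bS_G)^G\to\bS^{hG}$: this requires identifying the $K$-theory of finite $G$-sets with $(\bS_G)^G$ and checking that the ``evaluate the local system at points of $BG$'' maps assemble into the standard comparison. By contrast, the construction of the functor, its lax monoidality, and properties (1)--(2) are essentially formal consequences of the machinery of \S\ref{sec:prelim}--\S\ref{sub:technical}.
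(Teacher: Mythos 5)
Your overall route is the same as the paper's: feed $\Fin$ into \Cref{cor:full-k-functor}, $p$-complete pointwise, deduce (1) from Barratt--Priddy--Quillen, (2) from the fact that ``trivial action followed by orbits'' is the identity of $\Fin$, and (3) by reducing to connected $A\simeq BG$ and identifying the Segal map with the comparison from genuine fixed points of the equivariant sphere to homotopy fixed points, which is an equivalence after $p$-completion by Carlsson's theorem (the paper handles the $I$-adic versus $p$-adic completion point by citing May--McClure, where you argue it directly for $p$-groups; that is fine).

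There is, however, one step that fails as written: you equip $\Fin$ with the \emph{coproduct} symmetric monoidal structure and assert that $\sqcup$ preserves $\Sfin{1}{p}$-shaped colimits in each variable. It does not: finite sets are objects of $\Sfin{1}{p}$, so $\Sfin{1}{p}$-colimits include finite coproducts, and for a fixed nonempty $S$ the functor $(-)\sqcup S$ preserves neither the initial object nor binary coproducts (e.g.\ $(A\sqcup B)\sqcup S\not\simeq (A\sqcup S)\sqcup(B\sqcup S)$). Consequently $(\Fin,\sqcup)$ is not a commutative algebra in $\Cat_{\Sfin{1}{p}}$ and \Cref{cor:full-k-functor} cannot be applied to it, so your functor does not acquire a lax symmetric monoidal structure this way. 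The paper instead uses the Cartesian product on $\Fin$, which does distribute over $\Sfin{1}{p}$-colimits in each variable (it commutes with disjoint unions and with $G$-orbits for trivial action on the other factor), and which moreover induces the standard ring structure on $\bS$ under Barratt--Priddy--Quillen; note that the ``additive'' structure being group-completed is already encoded by the span functoriality (the coCartesian monoid structure of \Cref{exm:catsemiadd}), not by your choice of monoidal structure, so taking $\sqcup$ as the multiplication is the wrong choice even heuristically. Replacing $\sqcup$ by $\times$ repairs this, and with that change your argument coincides with the paper's proof.
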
 

\begin{proof}
Let $\Fin$ denote the category of finite sets and note that $\Fin \in \Cat_{\Sfin{1}{p}}$.  In fact, as the product of finite sets commutes with colimits separately in each variable, we have $\Fin \in \CAlg(\Cat_{\Sfin{1}{p}})$.   Thus, \Cref{cor:full-k-functor} supplies a lax symmetric monoidal functor
\[
\underline{K}(\Fin): \Span(\Sfin{1}{p}) \to \Sp.
\]
Composing with $p$-completion, regarded as a lax symmetric monoidal functor $(-)^{\wedge}_p: \Sp \to \Sp_{(p)}$, we set $\bS^{\tsadi_1}_p = \underline{K}(\Fin)^{\wedge}_p$ and claim that it satisfies the conditions of the theorem:
\begin{itemize}
\item Condition (1) is just the fact that $\bS \simeq (\Fin^{\simeq})^{\gp}$ (i.e., the Barratt-Priddy-Quillen theorem).
\item For condition (2), we note that the span $(* \leftarrow BC_p \rightarrow*)$ is sent to the map induced on $p$-complete $K$-theory by the functor
\[
\Fin \xrightarrow{\mathrm{triv}_G} \Fun(BG, \Fin) \xrightarrow{-/G} \Fin
\]
where the first arrow gives a finite set the trivial $G$-action, and the second arrow quotients by the action of $G$.  This functor is naturally isomorphic to the identity functor, so the map induced on $K$-theory is also the identity. 
\item For condition (3), we must check that for any $A\in \Sfin{1}{p}$, the natural map
\[
K(\Fin^A)^{\wedge}_p \to (K(\Fin)^{\wedge}_p)^A
\]
is an equivalence.  Since both sides send disjoint unions in $A$ to products, it suffices to check in the special case that $A$ is connected.  Choosing an equivalence $A \simeq BG$ for a finite $p$-group $G$, we are left to consider the natural map
\[
K(\Fin^{BG})^{\wedge}_p \to (K(\Fin)^{\wedge}_p)^{BG}.
\]
But this is the map from the $G$-fixed points of the equivariant $p$-complete sphere to the $G$-homotopy fixed points, which is an equivalence by the Segal conjecture, as proved by Carlsson \cite{Carlsson} (cf. \cite{MayMcClure} for $p$-completion versus completion at the augmentation ideal).  
\end{itemize}
\end{proof}

\begin{proof}[Proof of \Cref{thm:main}]
By \Cref{exm:tsadi_obj} and the definition of the symmetric monoidal structure on $\CMon{m}{p}(\Sp_{(p)})$, $\bS^{\tsadi_1}_p$ defines a commutative algebra in $\tsadi_1$.  Since $U_{\tsadi_1}(\bS^{\tsadi_1}_p) = \bS_p$ has a unique commutative algebra structure, the theorem follows.  
\end{proof}

\subsection{$\bS^{\tsadi_1}_p$ and $T(1)$-local spectra}\label{sub:T1spherealg}

Recall that $\Sp_{T(1)}$ can be regarded as a full subcategory of $\tsadi_1$ via the right adjoint $U_{T(1)}^{\tsadi}$ of the localization.  We can make two straightforward variants of the construction of $\bS_p^{\tsadi_1}$ which live entirely in the $T(1)$-local category\footnote{We keep our justifications brief, as (more complicated versions of) these variants are discussed more carefully in \cite[\S 4]{CY}.}.  

\begin{var}\label{var:KUp}
Let $\ell\neq p$ be a prime and consider the functor
\[
L_{T(1)}\underline{K}(\Vect^{\fin}_{\Flbar}): \Span(\Sfin{1}{p}) \to \Sp_{T(1)}.
\]
By \cite[Theorem 0.3]{FHM} and the Atiyah-Segal completion theorem, this functor satisfies the Segal condition and takes the value $\KU_p$ at $*\in \Span(\Sfin{1}{p})$.  It is therefore the unique $p$-typical $1$-commutative monoid structure on $\KU_p$.
\end{var}

\begin{var}\label{var:sk1}
One can deduce from \cite{FHM} (cf. \cite[\S 4.1.2]{CY}) that via the identification of \Cref{var:KUp}, the map induced by Frobenius on $\Flbar$ coincides with the Adams operation $\psi^\ell: \KU_p \to \KU_p$, and that the functor
\[
L_{T(1)}\underline{K}(\Vect^{\fin}_{\Fl}): \Span(\Sfin{1}{p}) \to \Sp_{T(1)}
\]
exhibits the unique $p$-typical $1$-commutative monoid structure on the ring 
\[
\KU_p^{h\psi^\ell} = \fib(\KU_p \xrightarrow{\psi^\ell-1} \KU_p)
\]
of homotopy fixed points under the $\Z$-action determined by the Adams operation.  
\end{var}

By functoriality of our constructions, these $T(1)$-local spectra receive natural maps from $\bS^{\tsadi_1}_p$, and in particular we have:

\begin{prop}\label{prop:sk1}
Let $p$ be an odd prime and let $\bS_{T(1)}\in \CAlg(\Sp_{T(1)})$ denote the $T(1)$-local sphere.  Then $U_{T(1)}^{\tsadi}(\bS_{T(1)})$ admits the structure of a commutative algebra over $\bS^{\tsadi_1}_p$.  
\end{prop}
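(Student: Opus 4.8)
The plan is to exhibit a morphism $\bS^{\tsadi_1}_p \to U_{T(1)}^{\tsadi}(\bS_{T(1)})$ in $\CAlg(\tsadi_1)$; such a morphism makes its target an algebra over $\bS^{\tsadi_1}_p$. I will obtain it by combining the natural transformation coming from the inclusion of finite sets into $\Fl$-vector spaces with the classical description of the $K(1)$-local sphere at odd primes.

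First I would choose the prime $\ell$. Since $p$ is odd, $\Z_p^\times \cong \Z/(p-1)\times\Z_p$ is procyclic; a residue $r$ modulo $p^2$ that is a primitive root mod $p$ with $r^{p-1}\not\equiv 1 \pmod{p^2}$ represents a topological generator, and by Dirichlet's theorem there is a prime $\ell\equiv r\pmod{p^2}$, necessarily $\ell\neq p$. Then, by the classical computation of the $K(1)$-local sphere, the unit map induces an equivalence of $E_\infty$-rings
\[
\bS_{T(1)} \;=\; L_{K(1)}\bS_p \;\simeq\; \KU_p^{h\psi^\ell} \;=\; \fib\bigl(\KU_p \xrightarrow{\psi^\ell - 1} \KU_p\bigr),
\]
the right-hand side being precisely the ring appearing in \Cref{var:sk1} (here I use the telescope conjecture at height $1$ to identify $T(1)$- and $K(1)$-localizations). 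Combined with \Cref{var:sk1} and the uniqueness of $1$-commutative monoid structures on $T(1)$-local spectra (\Cref{prop:cmon_props}(4)), this identifies the lax symmetric monoidal functor $L_{T(1)}\underline{K}(\Vect^{\fin}_{\Fl})$, regarded via $U_{T(1)}^{\tsadi}$ as an object of $\CAlg(\tsadi_1)$, with $U_{T(1)}^{\tsadi}(\bS_{T(1)})$.

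Next I would construct the map. The functor $\Fin\to\Vect^{\fin}_{\Fl}$ sending a finite set $S$ to the free vector space $\Fl[S]$ is left adjoint to the underlying-finite-set functor, hence preserves all colimits — in particular $\Sfin{1}{p}$-shaped ones — and it is symmetric monoidal, since $\Fl[S\times T]\cong \Fl[S]\otimes_{\Fl}\Fl[T]$ and $\Fl[*]=\Fl$. It is therefore a morphism in $\CAlg(\Cat_{\Sfin{1}{p}})$, and by the functoriality asserted in \Cref{cor:full-k-functor}, postcomposed with the lax symmetric monoidal functors $(-)^\wedge_p$ and $L_{T(1)}$, it induces a morphism
\[
\bS^{\tsadi_1}_p \;=\; \underline{K}(\Fin)^\wedge_p \;\longrightarrow\; L_{T(1)}\underline{K}(\Vect^{\fin}_{\Fl})
\]
of lax symmetric monoidal functors $\Span(\Sfin{1}{p})\to\Sp_{(p)}$, i.e.\ — via \Cref{exm:tsadi_obj} and the symmetric monoidal structure on $\CMon{1}{p}(\Sp_{(p)})$, exactly as in the proof of \Cref{thm:main} — a morphism in $\CAlg(\tsadi_1)$. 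By the previous paragraph its target is $U_{T(1)}^{\tsadi}(\bS_{T(1)})$, which is then the desired algebra structure.

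The step I expect to be the main obstacle is the identification $\bS_{T(1)}\simeq \KU_p^{h\psi^\ell}$: although classical, it requires care in (i) matching the homotopy fixed points of the dense $\Z$-action of \Cref{var:sk1} with the continuous $\Z_p^\times$-fixed points that compute $L_{K(1)}\bS_p$ — which is exactly where one uses both that $\ell$ topologically generates $\Z_p^\times$ and that $p$ is odd (for $p=2$ the group $\Z_2^\times$ is not procyclic, and one must instead use $\mathrm{KO}$-theoretic models) — and (ii) upgrading the equivalence to one of $E_\infty$-rings compatible with the lax monoidal structures in play. Everything else is an unwinding of functoriality already established in \Cref{sec:prelim} and \Cref{sub:technical}.
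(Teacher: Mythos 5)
Your proposal is correct and follows essentially the same route as the paper: choose a prime $\ell$ generating $\Z_p^{\times}$ so that $\bS_{T(1)} \simeq \KU_p^{h\psi^\ell}$ (using $p$ odd), identify the latter's unique $1$-commutative monoid structure via \Cref{var:sk1}, and produce the algebra map from the free functor $\Fin \to \Vect^{\fin}_{\Fl}$ in $\CAlg(\Cat_{\Sfin{1}{p}})$ through \Cref{cor:full-k-functor}. The extra detail you supply (Dirichlet/primitive root, and the care needed in upgrading $\bS_{T(1)}\simeq \KU_p^{h\psi^\ell}$ to $E_\infty$-rings) only fleshes out steps the paper treats as classical.
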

\begin{proof}
Choose a prime $\ell$ which is a generator of $\Z_p^{\times}$, so that $\KU_p^{h\psi^\ell} = \bS_{T(1)}$. 
Since the free functor $\Fin \to \Vect^{\fin}_{\Fl}$ is a map in $\CAlg(\Cat_{\Sfin{1}{p}})$, we obtain from \Cref{cor:full-k-functor} a natural transformation of functors 
\[
\underline{K}(\Fin)^{\wedge}_p \to \underline{K}(\Vect^{\fin}_{\Fl}) \to L_{T(1)}\underline{K}(\Vect^{\fin}_{\Fl}).
\]
Unwinding the definitions and applying \Cref{thm:mainprecise} and \Cref{var:sk1}, we obtain the desired statement.
\end{proof}

\begin{rmk}\label{rmk:sk1p=2}
When $p=2$, $\bS_{T(1)}$ does not arise as the fixed points of an Adams operation on $\KU_2$; instead, the proof of \Cref{prop:sk1} only shows that a certain $C_2$-Galois extension of $\bS_{T(1)}$ is an algebra over $\bS^{\tsadi_1}_{2}$.  In fact, this difficulty is essential: we have by \Cref{thm:mainprecise} that $|BC_2|$ is the identity on $\bS^{\tsadi_1}_2$, but it is shown in \cite{CY} that $|BC_2|_{\bS_{T(1)}}$ is multiplication by the element
\[
1+\epsilon \in \pi_0 \bS_{T(1)} \cong \Z_2[\epsilon]/(\epsilon^2, 2\epsilon),
\]
so there cannot be a ring map $\bS^{\tsadi_1}_2 \to U_{T(1)}^{\tsadi}(\bS_{T(1)})$.    
\end{rmk}

\subsection{Higher cyclotomic extensions of $\bS^{\tsadi_1}_p$}\label{sub:cycl}


Let $(\C,\one)\in \CAlg_{\tsadi_n}(\PrL)$ be a $p$-typical $\infty$-semiadditively symmetric monoidal $\infty$-category of height $n$.  Then Carmeli--Schlank--Yanovski show that for any $r\geq 1$, there is an idempotent 
\[
\varepsilon_r \in \pi_0 \one[B^nC_{p^r}] := \pi_0 \Map(\one, \one[B^nC_{p^r}])
\]
such that the map $\one[B^nC_{p^r}] \to \one [B^nC_{p^{r-1}}]$ induced by the surjection $C_{p^r} \twoheadrightarrow C_{p^{r-1}}$ can be identified with the map inverting $\varepsilon_r$ \cite[Proposition 4.5]{CSYCyc}.  

\begin{dfn}\label{dfn:rootsofunity}
In the above situation, define for any $R\in \CAlg(\C)$
\[
R[\omega^{(n)}_{p^r}] := R[B^nC_{p^r}][(1-\varepsilon_r)^{-1}]
\]
to be the \emph{height $n$ $p^r$-th cyclotomic extension} of $R$.  
\end{dfn}


\begin{exm}
Consider the case $\C = \Mod_{\Q}$ at height $0$.  Then we have
\[
1-\varepsilon_1 = \frac{1}{p}(1+\gamma +\cdots +\gamma^{p-1}) \in \Q[\gamma]/(\gamma^p-1) \cong \Q[C_p]
\]
and so the height $0$ $p$-th cyclotomic extension
\[
\Q[\zeta_p^{(0)}] = \Q[C_p][(1-\varepsilon_1)^{-1}] = \Q[\zeta_p]
\]
is the usual $p$-th cyclotomic extension.  
\end{exm}

A basic fact in the classical situation is that the $p^r$-th cyclotomic extensions of a field of characteristic zero are \emph{Galois}.  Carmeli--Schlank--Yanovski show that, in fact, an analogous property holds in the $T(n)$-local (or $K(n)$-local) setting:

\begin{thm}[{\cite[Proposition 5.2]{CSYCyc}}]
For $R\in \CAlg(\Sp_{T(n)})$ and $r\geq 1$, the map
\[
R \to R[\omega^{(n)}_{p^r}]
\]
is a $(\Z/p^r)^{\times}$-Galois extension in $\CAlg(\Sp_{T(n)})$, in the sense of Rognes \cite{RognesGal}.
\end{thm}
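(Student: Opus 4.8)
The plan is to reduce the statement to the universal case $R=\one:=\bS_{T(n)}$ and then verify the two axioms in Rognes's definition of a $G$-Galois extension, with $G=(\Z/p^r)^{\times}$, using the $\infty$-semiadditivity of $\Sp_{T(n)}$ as the principal input. For the reduction: since $R[B^nC_{p^r}]\simeq R\otimes_{\bS_{T(n)}}\bS_{T(n)}[B^nC_{p^r}]$ and inverting the idempotent $1-\varepsilon_r$ is a smashing localization (hence commutes with base change), one gets $R[\omega^{(n)}_{p^r}]\simeq R\otimes_{\bS_{T(n)}}\bS_{T(n)}[\omega^{(n)}_{p^r}]$; as Galois extensions are stable under base change along maps of commutative algebras, it suffices to treat $R=\one$.

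Next I would set up the action. The isomorphism $(\Z/p^r)^{\times}\cong\Aut(C_{p^r})$ makes $G$ act on the pointed space $B^nC_{p^r}$, hence on the augmented commutative $\one$-algebra $\one[B^nC_{p^r}]$. This action fixes the idempotents $\varepsilon_j$: an automorphism $\alpha$ of $C_{p^r}$ descends compatibly along the tower $C_{p^r}\twoheadrightarrow C_{p^{r-1}}\twoheadrightarrow\cdots$, and since $\varepsilon_j$ is characterized by the property that inverting it realizes the transition map $\one[B^nC_{p^j}]\to\one[B^nC_{p^{j-1}}]$ as a localization, $\alpha$ must send $\varepsilon_j$ to an idempotent cutting out the same localization, i.e. back to $\varepsilon_j$. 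Hence $G$ acts over $\one$ on $B:=\one[\omega^{(n)}_{p^r}]=\one[B^nC_{p^r}][(1-\varepsilon_r)^{-1}]$.

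For dualizability and faithfulness I would use ambidexterity twice. First, $\infty$-semiadditivity makes the norm $\colim_{B^nC_{p^r}}\one\to\lim_{B^nC_{p^r}}\one$ an equivalence, so $\one[B^nC_{p^r}]\simeq\one^{B^nC_{p^r}}$ is self-dual, hence dualizable, over $\one$; the basepoint and the collapse $B^nC_{p^r}\to *$ exhibit $\one$ as a retract, so it is faithful. Iterating the idempotent splitting yields $\one[B^nC_{p^r}]\simeq\prod_{j=0}^{r}\one[\omega^{(n)}_{p^j}]$ (with $\one[\omega^{(n)}_{p^0}]=\one$); since $-\otimes_\one-$ distributes over finite products of $\one$-algebras, faithfulness of the product forces faithfulness of each factor, so $B$ is faithful, and as a retract of a self-dual module it is dualizable. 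Granting this, the fixed-point axiom $\one\xrightarrow{\ \sim\ }B^{hG}$ follows from the torsor axiom below by descent: the Amitsur cosimplicial object $\{B^{\otimes_\one\bullet+1}\}$ has totalization $\one$ (as $B$ is faithful and dualizable, hence descendable) and is identified, via the torsor axiom, with $\{\Map(G^{\times\bullet},B)\}$, whose totalization is $B^{hG}$.

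This leaves the crux: that $h\colon B\otimes_\one B\to\Map(G,B)$, $h(b\otimes b')(g)=b\cdot g(b')$, is an equivalence. I would begin from $\one[B^nC_{p^r}]\otimes_\one\one[B^nC_{p^r}]\simeq\one[B^nC_{p^r}\times B^nC_{p^r}]$ and use the shear automorphism $(x,y)\mapsto(x,x+y)$ of $B^nC_{p^r}\times B^nC_{p^r}$ coming from the abelian group structure to rewrite the left-hand idempotent decomposition; on the $(p^r,p^r)$-primitive summand this should reduce matters to the statement that, over the primitive part of the first factor, the second factor becomes a disjoint union of $|G|$ copies of the primitive part, permuted simply transitively by $G$ — the higher-semiadditive avatar of the classical fact that the primitive $p^r$-th roots of unity form a $(\Z/p^r)^{\times}$-torsor. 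Carefully tracking the idempotents $\varepsilon_r$ and $\varepsilon_r\otimes\varepsilon_r$ through this change of coordinates — equivalently, establishing the simply-transitive action at the level of the cardinality endomorphisms $|B^nC_{p^r}|$ and the algebra decompositions they induce — is the step I expect to be the main obstacle, and the one where I would lean on the cyclotomic-idempotent calculus of \cite{CSYCyc} rather than on formalities.
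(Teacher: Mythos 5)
A preliminary remark: this statement is not proved in the paper at all --- it is imported verbatim from \cite[Proposition 5.2]{CSYCyc} and used as a black box, so there is no in-paper argument to compare yours against; I can only assess the sketch on its own terms. Your overall architecture (base change to $R=\bS_{T(n)}$, the $(\Z/p^r)^{\times}$-action fixing the idempotents $\varepsilon_j$, self-duality of $\one[B^nC_{p^r}]$ via ambidexterity, and deducing the fixed-point axiom from the torsor axiom by tensoring up along $B$) is a sensible reduction strategy, but it has two genuine gaps.

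First, faithfulness. The step ``faithfulness of the product forces faithfulness of each factor'' is false as a general principle: $\Z\times\Q$ is faithful over $\Z$ while $\Q$ is not, so from faithfulness of $\one[B^nC_{p^r}]\simeq\prod_{j=0}^{r}\one[\omega^{(n)}_{p^j}]$ you cannot conclude faithfulness of the top factor $B=\one[\omega^{(n)}_{p^r}]$. Since your derivation of the fixed-point axiom $\one\simeq B^{hG}$ runs through faithfulness of $B$ (whether via descendability --- which, incidentally, does not follow formally from ``faithful and dualizable'' --- or via the standard argument of tensoring the unit map with $B$ and using conservativity of $B\otimes-$), that axiom is left unsupported. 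Faithfulness of the higher cyclotomic extensions is in fact one of the substantive theorems of \cite{CSYCyc}, not a formal consequence of the idempotent splitting. Second, and more importantly, the torsor axiom $B\otimes_{\one}B\simeq\prod_{(\Z/p^r)^{\times}}B$ --- the actual content of the result --- is only gestured at: the shear trick reduces it to the assertion that, after inverting $1-\varepsilon_r$ in the first factor, the group algebra $\one[\omega^{(n)}_{p^r}][B^nC_{p^r}]$ splits into exactly $\varphi(p^r)$ copies of $\one[\omega^{(n)}_{p^r}]$ permuted simply transitively by $(\Z/p^r)^{\times}$. That is precisely the higher-height analogue of ``adjoining a primitive $p^r$-th root of unity splits the group algebra,'' it is where the height-$n$ hypothesis and the cyclotomic-idempotent calculus genuinely enter, and you explicitly defer it to \cite{CSYCyc}. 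As it stands, then, the proposal is a plausible outline whose crux is assumed rather than proved, with one step (faithfulness of the factor) that is actually incorrect as argued.
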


One can wonder whether the height $n$ cyclotomic extensions are Galois for a general $\C \in  \CAlg_{\tsadi_n}(\PrL)$ in place of $\Sp_{T(n)}$.  As was explained to the author by Carmeli, Schlank, and Yanovski (and alluded to in \cite[\S 4]{CSYCyc}), the existence of $\bS^{\tsadi_1}_p$ shows that this is not the case:

\begin{prop}\label{prop:notgal}
The map of commutative algebras
\[
\bS^{\tsadi_1}_p \to \bS^{\tsadi_1}_p[\omega^{(1)}_p]
\]
is not a $(\Z/p)^{\times}$-Galois extension in $\CAlg(\tsadi_1)$.\footnote{The analogous statement with the $p^r$-th cyclotomic extensions is slightly more combinatorially involved, but can easily be checked along the same lines.}
\end{prop}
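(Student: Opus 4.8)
The plan is to compute the relevant homotopy groups on both sides of the candidate Galois extension and observe that the Galois descent condition fails already on $\pi_0$. Recall that for a faithful $G$-Galois extension $R \to S$ in the sense of Rognes, one requires in particular that the natural map $S \otimes_R S \to \prod_{G} S$ (equivalently $\prod_G S$, indexed by $G$) is an equivalence, and that $S$ is dualizable and faithful over $R$. So I would first pin down what $\bS_p^{\tsadi_1}[\omega_p^{(1)}]$ actually is as an object of $\tsadi_1$, or at least compute $U_{\tsadi_1}$ of it. By \Cref{dfn:rootsofunity}, $\bS_p^{\tsadi_1}[\omega_p^{(1)}] = \bS_p^{\tsadi_1}[BC_p][(1-\varepsilon_1)^{-1}]$. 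The key input is \Cref{thm:mainprecise}(2): the operator $|BC_p|$ acts as the identity on $\bS_p^{\tsadi_1}$.

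The main computation I expect to carry out is the following. The cardinality $|BC_p|$ factors through the composite $\bS_p^{\tsadi_1} \to \bS_p^{\tsadi_1}[BC_p] \xrightarrow{\text{transfer}} \bS_p^{\tsadi_1}$, and the idempotent $\varepsilon_1 \in \pi_0 \one[BC_p]$ of \cite[Proposition 4.5]{CSYCyc} is characterized by the property that inverting it reproduces the map $\one[BC_p] \to \one[B^1 C_1] = \one$ induced by $C_p \to 1$; this splitting means $\one[BC_p] \simeq \one \times (1-\varepsilon_1)$-part, and the cyclotomic extension $\one[\omega_p^{(1)}]$ is precisely the complementary factor $\bS_p^{\tsadi_1}[BC_p][(1-\varepsilon_1)^{-1}]$. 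Now I would apply $U_{\tsadi_1}$, which is symmetric monoidal on underlying objects in the sense that $U_{\tsadi_1}(\bS_p^{\tsadi_1}[BC_p]) \simeq \bS_p[BC_p] = \bS_p \wedge (BC_p)_+$ (the left adjoint $L_{\tsadi_1}$ applied to $\bS[BC_p]$, then $U_{\tsadi_1}$ back — here I need that $U_{\tsadi_1}$ sends $\one[BC_p]$ to $\bS_p[BC_p]$, which follows since $BC_p$ is a $1$-truncated $p$-finite space so $L_{\tsadi_1}\bS[BC_p]$ has underlying spectrum $\bS_p \wedge (BC_p)_+$ after $p$-completion). The element $|BC_p| = 1$ forces the transfer-then-restriction composite, hence the norm element, to be a unit, and this rigidifies the idempotent $\varepsilon_1$: I expect to find that $U_{\tsadi_1}(\bS_p^{\tsadi_1}[\omega_p^{(1)}])$ is the $p$-complete sphere $\bS_p$ itself, or more precisely something whose $\pi_0$ is $\Z_p$ rather than $\Z_p[\zeta_p]$ — the cyclotomic extension degenerates because height-$1$ ambidexterity data that would normally produce the extension is trivialized by the Segal-conjecture input.

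Given that, the Galois condition fails for rank reasons: a $(\Z/p)^\times$-Galois extension of $\bS_p^{\tsadi_1}$ would have underlying spectrum a dualizable $\bS_p$-module of rank $|(\Z/p)^\times| = p-1$ (since $U_{\tsadi_1}$ is symmetric monoidal and preserves dualizability, a Galois extension descends to a faithful dualizable extension of $\bS_p$ of the expected rank), but $\bS_p$ has rank $1$; since $p - 1 \neq 1$ for $p$ odd (and the $p=2$ case where $p-1=1$ is handled separately, or simply the proposition is stated for general $p$ with the $p=2$ subtlety of \Cref{rmk:sk1p=2} noted), this is a contradiction. Concretely, I would argue: if $\bS_p^{\tsadi_1} \to \bS_p^{\tsadi_1}[\omega_p^{(1)}]$ were $(\Z/p)^\times$-Galois, then applying the symmetric monoidal functor $U_{\tsadi_1}$ gives a $(\Z/p)^\times$-Galois extension $\bS_p \to U_{\tsadi_1}(\bS_p^{\tsadi_1}[\omega_p^{(1)}])$ of commutative ring spectra; but a nontrivial connected Galois extension of $\bS_p$ with Galois group of order prime to $p$ would split $\bS_p$ nontrivially upon $p$-completion, contradicting that $\pi_0 \bS_p = \Z_p$ is a local ring with no nontrivial idempotents and no unramified extensions realized in $\pi_0$. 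Alternatively and more cleanly: compute $\pi_0 (\bS_p^{\tsadi_1}[\omega_p^{(1)}] \otimes_{\bS_p^{\tsadi_1}} \bS_p^{\tsadi_1}[\omega_p^{(1)}])$ after applying $U_{\tsadi_1}$ and check it does not match $\pi_0 \prod_{(\Z/p)^\times} \bS_p^{\tsadi_1}[\omega_p^{(1)}]$.

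The hard part will be the bookkeeping in the second paragraph: correctly identifying $U_{\tsadi_1}(\bS_p^{\tsadi_1}[\omega_p^{(1)}])$, i.e., understanding exactly how the idempotent $\varepsilon_1$ of \cite{CSYCyc} interacts with the triviality of $|BC_p|$ on $\bS_p^{\tsadi_1}$. The conceptual point — that $|BC_p| = 1$ is incompatible with a rank-$(p-1)$ Galois extension — is robust, but making it rigorous requires either (a) a clean descent/rank argument via the symmetric monoidal $U_{\tsadi_1}$, which I believe is the cleanest route, or (b) an explicit $\pi_*$ computation of the comparison map $\bS_p^{\tsadi_1}[\omega_p^{(1)}] \otimes_{\bS_p^{\tsadi_1}} \bS_p^{\tsadi_1}[\omega_p^{(1)}] \to \prod_{(\Z/p)^\times}\bS_p^{\tsadi_1}[\omega_p^{(1)}]$. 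I would pursue (a): once one knows $U_{\tsadi_1}$ is symmetric monoidal and conservative enough to detect the failure (it is faithful on the relevant objects by \cite[Corollary 5.3.10]{CSYHeight}-type conservativity, or directly because a would-be Galois extension is dualizable hence its underlying object detects the splitting), the contradiction $p - 1 = 1$ closes the argument for $p$ odd, and the footnote's remark about $p^r$ follows by the same rank count with $|(\Z/p^r)^\times| = p^{r-1}(p-1)$.
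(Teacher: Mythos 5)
Your argument has a genuine gap at its foundation: the functor $U_{\tsadi_1}$ is only \emph{lax} symmetric monoidal, being the right adjoint of the symmetric monoidal left adjoint $L_{\tsadi_1}$, and it is very far from strong monoidal (its value on the unit $\one_{\tsadi_1}$ is certainly not $\bS$ --- the whole point of the paper is that $\bS_p$ lifts as a non-unit algebra). Consequently $U_{\tsadi_1}$ does not preserve relative tensor products, dualizability, or Galois extensions, so the step ``apply the symmetric monoidal $U_{\tsadi_1}$ to get a $(\Z/p)^\times$-Galois extension of $\bS_p$ of rank $p-1$'' is unjustified; the relative tensor product $S\otimes_R S$ is a colimit, and $U_{\tsadi_1}$ preserves limits, not colimits. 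Relatedly, your identification $U_{\tsadi_1}(\bS^{\tsadi_1}_p[BC_p]) \simeq (\Sigma^\infty_+ BC_p)^{\wedge}_p$ is false for exactly this reason: the correct computation (this is \Cref{lem:seg}) uses $1$-semiadditivity to rewrite the colimit $\bS^{\tsadi_1}_p[BC_p]$ as the limit $(\bS^{\tsadi_1}_p)^{BC_p}$, which $U_{\tsadi_1}$ does preserve, giving $\bS_p^{BC_p}$, whose $\pi_0$ is the $p$-completed Burnside ring $A(C_p)^{\wedge}_p \cong \Z_p^{\oplus 2}$ by the Segal conjecture --- rank $2$, not $1$. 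Note that your (incorrect) identification would force $\pi_0\, U_{\tsadi_1}(\bS^{\tsadi_1}_p[\omega^{(1)}_p]) = 0$, contradicting your own later assertion that this $\pi_0$ is $\Z_p$; the value $\Z_p$ is correct, but only via the Burnside-ring computation, which your setup does not produce.

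Your fallback option (b) is essentially the paper's actual argument, but the missing idea is how to compute $\pi_0$ of the tensor square without any monoidality of $U_{\tsadi_1}$: one tensors the splitting $\bS^{\tsadi_1}_p[BC_p] \simeq \bS^{\tsadi_1}_p \oplus \bS^{\tsadi_1}_p[\omega^{(1)}_p]$ with itself over $\bS^{\tsadi_1}_p$ and uses the identification $\bS^{\tsadi_1}_p[BC_p]\otimes_{\bS^{\tsadi_1}_p}\bS^{\tsadi_1}_p[BC_p] \simeq \bS^{\tsadi_1}_p[BC_p\times BC_p]$, which holds internally to $\tsadi_1$ because the tensor product commutes with colimits. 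Then \Cref{lem:seg} applies to \emph{both} sides: $C_p\times C_p$ has $p+3$ conjugacy classes of subgroups, whereas the would-be Galois decomposition $\bS^{\tsadi_1}_p \oplus \bS^{\tsadi_1}_p[\omega^{(1)}_p]^{\oplus 2} \oplus \bS^{\tsadi_1}_p[\omega^{(1)}_p]^{(\Z/p)^{\times}}$ has $\pi_0$ free of rank $p+2$ (using $\pi_0\bS^{\tsadi_1}_p[\omega^{(1)}_p]\cong\Z_p$), a contradiction valid at every prime, including $p=2$ where your rank count $p-1=1$ gives nothing. Your conceptual instinct --- detect the failure on $\pi_0$ via the Segal conjecture --- is the right one, but without the ambidexterity-plus-limits computation of $U_{\tsadi_1}(\bS^{\tsadi_1}_p[BG])$ and the group-algebra trick for the tensor square, the proof as proposed does not go through.
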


To prove this statement, we first note the following consequence of the Segal conjecture:

\begin{lem}\label{lem:seg}
For a finite $p$-group $G$, the group
\[
\pi_0(\bS^{\tsadi_1}_p [BG]) := \pi_0 \Map(\one_{\tsadi_1}, \bS^{\tsadi_1}_p ) \cong \pi_0 U_{\tsadi_1}(\bS^{\tsadi_1}_p [BG])
\]
is a free $\Z_p$-module of rank equal to the number of conjugacy classes of subgroups of $G$.
\end{lem}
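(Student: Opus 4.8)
The plan is to unwind the statement through the definitions already established in the excerpt and then invoke the Segal conjecture in its ``Burnside ring'' form. First I would note that by \Cref{thm:mainprecise}(1) and the fact that $U_{\tsadi_1}$ preserves limits (in particular cotensors by spaces), one has
\[
\pi_0 U_{\tsadi_1}(\bS^{\tsadi_1}_p[BG]) \cong \pi_0\bigl(U_{\tsadi_1}(\bS^{\tsadi_1}_p)\bigr)^{BG} \cong \pi_0(\bS_p^{BG}) \cong \pi_0(\bS^{BG})^{\wedge}_p,
\]
using that $BG$ is a finite-type space so $p$-completion commutes appropriately. Thus the computation is reduced to a purely classical one: the $p$-completed zeroth stable cohomotopy group $\pi^0_s(BG_+)^{\wedge}_p$ of the classifying space of a finite $p$-group.

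Next I would identify this group via the Segal conjecture. Carlsson's theorem \cite{Carlsson} (the same input used in the proof of \Cref{thm:mainprecise}(3)) asserts that for a finite group $G$, the map $\bS^{BG} \to (\bS)^{hG}$ exhibits $\bS^{BG}$ as the completion of the $G$-equivariant sphere $\bS_G$ at the augmentation ideal $I$ of the Burnside ring $A(G)$; passing to $\pi_0$, the Segal conjecture gives $\pi_0(\bS^{BG}) \cong A(G)^{\wedge}_I$, the $I$-adic completion of the Burnside ring. For $G$ a finite $p$-group, one checks (a standard fact, e.g.\ via the idempotent/prime-ideal analysis of $A(G)$) that $I$-adic completion agrees with $p$-completion, so after $p$-completing we get $\pi_0 U_{\tsadi_1}(\bS^{\tsadi_1}_p[BG]) \cong A(G)^{\wedge}_p$. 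Finally, $A(G)$ is a free $\Z$-module with basis the isomorphism classes of transitive $G$-sets $G/H$, i.e.\ conjugacy classes of subgroups $H\leq G$; hence $A(G)^{\wedge}_p$ is a free $\Z_p$-module of rank equal to the number of conjugacy classes of subgroups of $G$, as claimed.

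I should also dispatch the first displayed isomorphism in the statement, $\pi_0\Map(\one_{\tsadi_1}, \bS^{\tsadi_1}_p[BG]) \cong \pi_0 U_{\tsadi_1}(\bS^{\tsadi_1}_p[BG])$: this is immediate since $\one_{\tsadi_1} = L_{\tsadi_1}\bS$ and $U_{\tsadi_1}$ is right adjoint to $L_{\tsadi_1}$, so $\Map_{\tsadi_1}(\one_{\tsadi_1}, X) \simeq \Map_{\Sp}(\bS, U_{\tsadi_1}(X)) \simeq \Omega^\infty U_{\tsadi_1}(X)$ for any $X\in\tsadi_1$; here I am using $\one_{\tsadi_1}\simeq L_{\tsadi_1}\bS$, which follows from $L_{\tsadi_1}$ being symmetric monoidal.

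The main obstacle is the step identifying $I$-adic completion with $p$-completion of $A(G)$ for a finite $p$-group $G$ — everything else is formal manipulation of adjoints and limits. The cleanest route is to recall that $A(G)\otimes\Z[1/p]$ is a product of copies of $\Z[1/p]$ (indexed by conjugacy classes of subgroups) on which $I$ acts invertibly except in the factor corresponding to the trivial subgroup, so $A(G)^{\wedge}_I$ only sees the $p$-local/$p$-complete information; alternatively one cites the original discussion in May--McClure \cite{MayMcClure}, already referenced in the excerpt, which addresses exactly the comparison between $p$-completion and completion at the augmentation ideal. I would phrase the final lemma's proof to lean on that reference to keep the argument short.
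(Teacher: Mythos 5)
Your overall strategy coincides with the paper's (reduce to $\pi_0$ of $\bS_p^{BG}$ and invoke Carlsson's theorem), but two steps need repair. The first is your opening reduction, which you justify only by ``$U_{\tsadi_1}$ preserves limits (in particular cotensors by spaces)''. The object $\bS^{\tsadi_1}_p[BG]$ is the tensor of $\bS^{\tsadi_1}_p$ with the space $BG$, i.e.\ a \emph{colimit} over $BG$, not a cotensor, so the fact that a right adjoint preserves limits says nothing about it by itself. The missing ingredient, which the paper invokes explicitly, is that $\tsadi_1$ is ($\infty$-)semiadditive, so that the colimit $\bS^{\tsadi_1}_p[BG]$ is canonically identified with the limit $(\bS^{\tsadi_1}_p)^{BG}$; only after this ambidexterity step does limit-preservation of $U_{\tsadi_1}$ give $U_{\tsadi_1}(\bS^{\tsadi_1}_p[BG])\simeq \bS_p^{BG}$. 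Since this identification is exactly what working in $\tsadi_1$ buys you, it cannot be elided as ``formal manipulation of adjoints and limits''.

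The second issue is the assertion that for a finite $p$-group the $I$-adic completion of the Burnside ring ``agrees with $p$-completion''. This is false as stated: already for $G=C_p$ one has $A(C_p)^{\wedge}_I\cong \Z\oplus\Z_p$, whereas $A(C_p)^{\wedge}_p\cong \Z_p^{\oplus 2}$. The correct standard fact (Laitinen, as used in \cite{MayMcClure}) is that the $I$-adic and $p$-adic topologies agree on the augmentation ideal itself, giving $A(G)^{\wedge}_I\cong \Z\oplus I^{\wedge}_p$. Your final answer survives because you $p$-complete once more afterwards, but then you should also justify that $\pi_0$ of the $p$-completed spectrum is the ($p$-)completion of $\pi_0$ --- not automatic here, since $\pi_0\bS^{BG}=A(G)^{\wedge}_I$ is not finitely generated over $\Z$; one needs, e.g., that $\pi_{-1}\bS^{BG}$ contributes no $\Hom(\Z/p^{\infty},-)$ term. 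The cleaner route, and in effect the one the paper takes, is to use the $p$-complete form of the Segal conjecture directly: $\bS_p^{BG}$ is the $p$-completion of the genuine $G$-fixed points of the equivariant sphere, whose $\pi_0$ is $A(G)$, a finitely generated free $\Z$-module, so $\pi_0\bS_p^{BG}\cong A(G)\otimes\Z_p$ is free over $\Z_p$ of rank the number of conjugacy classes of subgroups. With those two repairs your argument matches the paper's.
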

\begin{proof}
Since $\tsadi_1$ is $\infty$-semiadditive and $U_{\tsadi_1}$ preserves limits, we have
\[
 U_{\tsadi_1}(\bS^{\tsadi_1}_p [BG]) \simeq  U_{\tsadi_1}((\bS^{\tsadi_1}_p)^{BG}) \simeq U_{\tsadi_1}(\bS^{\tsadi_1}_p)^{BG} \simeq \bS_p^{BG}.
\]
But by Carlsson's theorem \cite{Carlsson}, this has $\pi_0$ isomorphic to the $p$-completed Burnside ring of finite $G$-sets, so the conclusion follows.  
\end{proof}

\begin{proof}[Proof of \Cref{prop:notgal}]
By \Cref{dfn:rootsofunity}, we have a splitting
\begin{equation}\label{eqn:1}
\bS^{\tsadi_1}_p[BC_p] \simeq \bS^{\tsadi_1}_p \oplus \bS^{\tsadi_1}_p[\omega^{(1)}_p].
\end{equation}
Now suppose that the extension $\bS^{\tsadi_1}_p \to \bS^{\tsadi_1}_p[\omega^{(1)}_p]$ were $(\Z/p)^{\times}$-Galois, so that
\[
\bS^{\tsadi_1}_p[\omega^{(1)}_p]\otimes_{\bS^{\tsadi_1}_p}\bS^{\tsadi_1}_p[\omega^{(1)}_p] \simeq \bS^{\tsadi_1}_p[\omega^{(1)}_p]^{(\Z/p)^{\times}}.
\]
Then, taking the tensor square of (\ref{eqn:1}) over $\bS^{\tsadi_1}_p$, we would have
\begin{align*}
\bS^{\tsadi_1}_p[BC_p\times BC_p] &\simeq \bS^{\tsadi_1}_p \oplus \bS^{\tsadi_1}_p[\omega^{(1)}_p]^{\oplus 2} \oplus \bS^{\tsadi_1}_p[\omega^{(1)}_p]^{\otimes 2} \\
&\simeq \bS^{\tsadi_1}_p \oplus \bS^{\tsadi_1}_p[\omega^{(1)}_p]^{\oplus 2} \oplus \bS^{\tsadi_1}_p[\omega^{(1)}_p]^{(\Z/p)^{\times}}\\
&\simeq \bS^{\tsadi_1}_p\oplus \bS^{\tsadi_1}_p[\omega^{(1)}_p]^{\oplus p+1}.
\end{align*}
But by comparing $\pi_0$, we see that this is impossible: namely, using \Cref{lem:seg} and (\ref{eqn:1}), we have that $\pi_0 \bS^{\tsadi_1}_p[\omega^{(1)}_p] = \Z_p$, so the right-hand side has $\pi_0$ isomorphic to $\Z_p^{\oplus p+2}$; on the other hand, by \Cref{lem:seg}, the left-hand side has $\pi_0$ isomorphic to $\Z_p^{\oplus p+3}$.
\end{proof}

\bibliographystyle{alpha}
\bibliography{Bibliography}

\end{document}